\newtheorem{theorem}{Theorem}
\newtheorem{lemma}[theorem]{Lemma}
\newtheorem{corollary}[theorem]{Corollary}
\newtheorem{prop}[theorem]{Proposition}
\theoremstyle{definition}
\newtheorem*{acknowledgements*}{Acknowledgements}
\theoremstyle{remark}
\newtheorem{remark}[theorem]{Remark}
\newcommand{\T}{\mathbb T}
\newcommand{\D}{\mathbb D}
\def\he{{H}(X)}
\def\hf{H(\Lambda(\varphi))}
\def\laf{\Lambda(\varphi)}
\def\h{H^1(\D)}
\newcounter{obs}
\begin{document}

\title
[On the set of extreme points of the unit ball]
{On the set of extreme points of the unit ball\\ of a Hardy-Lorentz space}

\author{Sergey V. Astashkin}
\address{Astashkin: Department of Mathematics, Samara National Research University, Moskovskoye shosse 34, Samara, 443086, Russian Federation; Saint Petersburg University, 7/9 Universitetskaya nab., Saint Petersburg, 199034, Russian Federation; Bahcesehir University, 34353, Istanbul, Turkey}
\email{astash56@mail.ru}
\urladdr{www.mathnet.ru/rus/person/8713}
\thanks{$^\dagger$\,This work was performed at the Saint Petersburg Leonhard Euler International Mathematical Institute and supported by the Ministry of Science and Higher Education of the Russian Federation (agreement no. 075-15-2022-287}).

\maketitle

\date{\today}


\begin{abstract}
We prove that every measurable function $f:\,[0,a]\to\mathbb{C}$ such that $|f|=1$ a.e. on $[0,a]$ is an extreme point of the unit ball of the Lorentz space $\Lambda(\varphi)$ on $[0,a]$ whenever $\varphi$ is a not linear, strictly increasing, concave, continuous function on $[0,a]$ with $\varphi(0)=0$.  As a consequence, we complement the classical de Leeuw-Rudin theorem on a description of extreme points of the unit ball of $H^1$ showing that $H^1$ is a unique Hardy-Lorentz space $H(\Lambda(\varphi))$, for which every extreme point of the unit ball is a normed outer function. Moreover, assuming that $\varphi$ is strictly increasing and strictly  concave, we prove that every function $f\in H(\Lambda(\varphi))$, $\|f\|_{H(\Lambda(\varphi))}=1$, such that the absolute value of its nontangential limit  ${f}(e^{it})$ is a constant on some set of positive measure of $[0,2\pi]$, is an extreme point of the unit ball of $H(\Lambda(\varphi))$.
\end{abstract}

Primary classification: 46E30

Secondary classification(s): 30H10, 30J05, 46A55, 46B22 

\keywords{Lorentz space, Hardy-Lorentz space, $H^p$ space, symmetric space,  rearrangement, extreme point, inner function, outer function}

\maketitle


\section{Introduction}
\label{S0}


In 1958, de Leeuw and Rudin gave the following remarkable characterization of extreme points of the unit ball
of the Hardy space $H^1:=\h$ (see \cite[Theorem 1]{deleeuw-rudin} or \cite[Chapter~9]{hoffman}).

\begin{theorem}[de Leeuw-Rudin]\label{t-1}
A function in $H^1$ is an extreme point of the unit ball of $H^1$ if 
and only if it has norm one and is an outer function.
\end{theorem}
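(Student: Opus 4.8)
The plan is to recast extremality as a perturbation problem and then exploit the canonical inner--outer factorization together with the rigidity of the Smirnov class $N^+$. Throughout I write $\langle g\rangle=\frac{1}{2\pi}\int_0^{2\pi}g(e^{it})\,dt$, so that $\|g\|_{H^1}=\langle|g|\rangle$, and I use that a nonzero $g\in H^1$ vanishes only on a null subset of $\T$ and factors as $g=IF$ with $I$ inner and $F$ outer. Since an extreme point of a unit ball automatically has norm one (otherwise $f=\tfrac12((f+\varepsilon h)+(f-\varepsilon h))$ for a small admissible perturbation $h$), it remains to treat $f$ with $\|f\|_{H^1}=1$.

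The first key step is a \emph{reduction lemma}: such an $f$ is \emph{not} extreme if and only if there is a real-valued $h\in L^\infty(\T)$, not a.e.\ constant, with $fh\in H^1$. For the forward direction, if $f=\tfrac12(G+H)$ with $G\ne H$ in the unit ball, set $k=\tfrac12(G-H)\in H^1\setminus\{0\}$; the chain $1=\langle|f|\rangle\le\tfrac12\langle|f+k|+|f-k|\rangle\le 1$ forces $|f+k|+|f-k|=2|f|$ a.e. The pointwise equality case of the triangle inequality for complex numbers (the locus $|1+w|+|1-w|=2$ is exactly the real segment $[-1,1]$) then yields $k=fh$ with $h:=k/f$ real and $|h|\le 1$ a.e., and the two norm constraints rule out $h$ being a constant. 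Conversely, given such an $h$, after subtracting $\langle|f|h\rangle$ and rescaling I may assume $|h|\le 1$ and $\langle|f|h\rangle=0$, whereupon $G=f(1+h)$ and $H=f(1-h)$ lie in $H^1$, are distinct, satisfy $\|G\|_{H^1}=\|H\|_{H^1}=1$, and average to $f$.

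With the lemma in hand the two implications become concrete. For ``outer $\Rightarrow$ extreme'': if $f$ is outer then $1/f\in N^+$, so any $h$ with $fh\in H^1$ satisfies $h=(fh)\cdot(1/f)\in N^+$; being also bounded, Smirnov's theorem $N^+\cap L^\infty(\T)=H^\infty$ places $h\in H^\infty$, and a bounded analytic function with real boundary values has $\operatorname{Im}h$ harmonic with vanishing boundary data, hence $h$ is constant. Thus no admissible $h$ exists and $f$ is extreme. For ``extreme $\Rightarrow$ outer'' I argue contrapositively: if the inner factor $I$ is nonconstant, set $h=\operatorname{Re}I$, which is real, bounded by $1$, and nonconstant (otherwise $\operatorname{Re}I$ would be a harmonic function equal to a constant on $\T$, forcing $I$ itself constant). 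Since $\bar I=1/I$ on $\T$, one computes $fh=\tfrac12 F\,(I^2+1)\in H^1$, and the reduction lemma shows $f$ is not extreme.

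I expect the main obstacle to be the rigidity step in the outer direction: justifying that a function lying in $N^+$ with bounded real boundary values must be constant. This is precisely where outerness enters (through $1/f\in N^+$) and where one must invoke the nontrivial Smirnov identification $N^+\cap L^\infty=H^\infty$ rather than an elementary estimate. Pinning down the complex equality case of the triangle inequality and carefully checking the norm and nontriviality bookkeeping in the reduction lemma are the remaining points requiring care.
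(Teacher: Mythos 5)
Your proof is correct and follows essentially the same route as the classical argument of de Leeuw--Rudin (and Hoffman's exposition) that the paper cites for this theorem: the reduction of extremality to the existence of a nonconstant real bounded $h$ with $fh\in H^1$, the Smirnov-class rigidity $N^+\cap L^\infty=H^\infty$ for the outer direction, and the perturbation $h=\operatorname{Re}I$ built from a nonconstant inner factor for the converse. The paper itself offers no proof of Theorem \ref{t-1}, so there is nothing further to compare.
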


In 1974, this result has been partially extended by Bryskin and Sedaev  to general Hardy-type spaces $H(X)$ such that the norm $\|\cdot\|_X$ of a symmetric space $X$ on $[0,2\pi]$ is strictly monotone \cite[Theorem 1]{bryskin-sedaev}.
%
%

The norm $\|\cdot\|_X$ of a symmetric space $X=X[0,a]$ is 
\textit{strictly monotone} whenever from the conditions   
$x,y\in X$, $|x|\le|y|$ a.e. on $[0,a]$ and
\begin{equation*}\label{1}
m\{t\in [0,a]:\,|x(t)|<|y(t)|\}>0
\end{equation*}
it follows that $\|x\|_X<\|y\|_X$.

%
%

\begin{theorem}[Bryskin-Sedaev]\label{t-2}
If  the norm of a symmetric space $X$ is strictly monotone,  then every normed outer function in $\he$ is an extreme point of the unit ball of ${\he}$. 
\end{theorem}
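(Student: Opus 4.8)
The plan is to follow the classical de Leeuw--Rudin scheme, invoking strict monotonicity at precisely the one step where the $L^1$-specific computation is unavailable. Let $f$ be a normed outer function in $H(X)$ and suppose $f=\tfrac12(f_1+f_2)$ with $f_1,f_2\in H(X)$ and $\|f_1\|_X,\|f_2\|_X\le1$; the goal is to force $f_1=f_2=f$. Since every symmetric space on $[0,2\pi]$ is continuously embedded in $L^1[0,2\pi]$, the functions $f,f_1,f_2$ all lie in $H^1$ and admit nontangential boundary values a.e., and because $f$ is outer we have $f(e^{it})\ne0$ a.e.

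The first step is to extract the pointwise modulus identity $2|f|=|f_1|+|f_2|$ a.e.\ on $\partial\D$. Put $w:=\tfrac12(|f_1|+|f_2|)$; the triangle inequality gives $|f|=\tfrac12|f_1+f_2|\le w$ a.e., while convexity of the norm yields $\|w\|_X\le\tfrac12(\|f_1\|_X+\|f_2\|_X)\le1=\|f\|_X$. Monotonicity of the norm together with $|f|\le w$ then forces $\|w\|_X=\|f\|_X$, and if $\{|f|<w\}$ had positive measure, strict monotonicity would give the contradiction $\|f\|_X<\|w\|_X$. Hence $|f|=w$, that is $2|f|=|f_1|+|f_2|$ a.e. Combined with the exact identity $2|f|=|f_1+f_2|$, this produces equality in the pointwise triangle inequality, $|f_1+f_2|=|f_1|+|f_2|$ a.e.---exactly the information that in the $L^1$ setting is read off for free from the norm equality $\|f_1+f_2\|_1=\|f_1\|_1+\|f_2\|_1$.

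The second step passes to the quotients $u:=f_1/f$ and $v:=f_2/f$. As $f$ is outer, $1/f$ belongs to the Smirnov class $N^+$, so $u,v\in N^+$; since $|f_1|,|f_2|\le2|f|$ we have $|u|,|v|\le2$ a.e.\ on $\partial\D$, and Smirnov's maximum principle ($N^+\cap L^\infty=H^\infty$) upgrades this to $u,v\in H^\infty$. By construction $u+v\equiv2$, and dividing the identity of Step~1 by $|f|$ gives $|u|+|v|=2=|u+v|$ a.e. The termwise chain $2=|u|+|v|\ge\operatorname{Re}u+\operatorname{Re}v=2$ then forces $|u|=\operatorname{Re}u$ and $|v|=\operatorname{Re}v$, i.e.\ $u,v$ are real and nonnegative a.e.\ on the boundary. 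A bounded analytic function with a.e.\ real boundary values is constant, so $u\equiv c$ for some real $c\ge0$ and $v\equiv2-c$.

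Finally, $f_1=cf$ and $f_2=(2-c)f$ give $c=\|f_1\|_X\le1$ and $2-c=\|f_2\|_X\le1$, whence $c=1$ and $f_1=f_2=f$, proving $f$ extreme. I expect the main obstacle to be the analytic bookkeeping in Step~2: justifying that division by the outer function $f$ keeps $f_1,f_2$ in $N^+$, and that essentially bounded boundary values promote membership from $N^+$ to $H^\infty$. The strict-monotonicity hypothesis, by contrast, enters only through the single clean deduction of Step~1.
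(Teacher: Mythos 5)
Your proof is correct and follows essentially the route the paper relies on: the strict-monotonicity deduction in your Step~1 is exactly the content of Lemma~\ref{l0} (Bryskin--Sedaev's Lemma~1, extracted from the de Leeuw--Rudin argument), and Step~2 is the classical division-by-the-outer-function step, with $N^+$ membership and Smirnov's maximum principle correctly invoked to conclude that $f_1/f$ and $f_2/f$ are real constants. No gaps.
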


In this paper, we will be interested mainly in Lorentz and Hardy-Lorentz spaces. Let $\varphi$ be an increasing, concave, continuous, real-valued function on $[0,a]$ such that $\varphi(0)=0$ and $\varphi(a)=1$.
The {\it Lorentz space} $\Lambda(\varphi):=\Lambda(\varphi)[0,a]$ consists of all complex-valued functions $x(t)$ measurable on $[0,a]$ and satisfying the condition:
\begin{equation}
\label{eqLor} 
\|x\|_{\Lambda(\varphi)}:=\int_0^{a} x^{*}(t)\, 
d\varphi(t)<\infty,
\end{equation}
where $x^*(t)$ is the decreasing rearrangement of $|x|$. Since $\varphi$ being strictly monotone implies that  the norm in $\laf$ is strictly monotone (see Lemma \ref{0} below), the result of Theorem \ref{t-2} holds, in particular, under the above condition imposed on $\varphi$, for the space ${\hf}$ (see \cite[Corollary]{bryskin-sedaev}).




In 1978, E.~Semenov stated the problem of describing the set of extreme points of the unit ball of a Hardy-Lorentz space $\hf$, \cite{semenov} as a part of a collection of 99 problems in linear and complex analysis
from the  Leningrad branch of the Steklov Mathematical Institute,  \cite[5.1 pp.23-24]{khavin-1978}. Later on, the same problem appeared in the subsequent list of problems in 1984, \cite[1.6 pp.22-23]{khavin-1984}, and in 1994, \cite[1.5 p.12]{khavin-1994}. 

Observe that the mapping 
\begin{equation}
\label{map} 
f\mapsto \tilde{f}:=f(e^{it}),
\end{equation}
where we denote by $f(e^{it})$ the nontangential limit of $f(z)$ as $z\to e^{it}$, is an isometric inclusion of a Hardy-type space $H(X)$ into the underlying symmetric space $X=X[0,2\pi]$.
Therefore, in connection with the above problem, it is important to find out which functions are being extreme points of the unit ball of a Lorentz space.
The following result (see \cite[Proposition 2.2]{carothers-dilworth-trautman}, \cite[Theorem 1]{carothers-turett} and \cite{semenov}) contains a description of extreme points for the unit ball of  $\Lambda(\varphi)$ whenever $\varphi$ is strictly concave.

\begin{theorem}
\label{t-0}
If $f$ is an extreme point of the unit ball of the space $\Lambda(\varphi):=\Lambda(\varphi)[0,a]$,  then 
\begin{equation}
\label{1a}
f(t)=\frac{\varepsilon(t)}{\varphi(m(E))}\chi_E(t),\;\;0\le t\le a,
\end{equation}
where $E\subset [0,a]$ is a measurable set 
and $\varepsilon$ is a complex-valued measurable function such that $|\varepsilon|=1$ a.e. 

Moreover, if the function $\varphi$ is strictly concave, $f$ is an extreme point of the unit ball of $\Lambda(\varphi)$ if and only if $f$ can be represented in the form \eqref{1a}.
\end{theorem}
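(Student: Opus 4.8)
The plan is to prove the two directions separately, using throughout that $\|\cdot\|_{\Lambda(\varphi)}$ is rearrangement invariant together with the layer-cake form $\|x\|_{\Lambda(\varphi)}=\int_0^\infty \varphi\big(m\{t:|x(t)|>\lambda\}\big)\,d\lambda$, obtained from \eqref{eqLor} by Fubini. For the first (necessary) part, write $f=\varepsilon g$ with $g=|f|$ and $\varepsilon$ unimodular on $E=\{g>0\}$; since the norm depends only on $g$, it suffices to show that $f$ extreme forces $g$ to be a.e.\ constant on $E$. Indeed, once $g=\alpha\chi_E$ we get $g^*=\alpha\chi_{[0,m(E))}$ and $\|f\|_{\Lambda(\varphi)}=\alpha\varphi(m(E))=1$, which yields the representation \eqref{1a} with $\alpha=1/\varphi(m(E))$.

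Contrapositively, I would suppose that $g$ is not constant on $E$, so that $g^*$ is a non-constant decreasing function on $[0,m(E)]$. The main obstacle is that perturbing $g$ alters its rearrangement, hence its norm, in a nonlinear way; I circumvent this by linearising. Fix a measurable measure-preserving map $\sigma:[0,a]\to[0,a]$ with $g=g^*\circ\sigma$, and choose a function $k$ on $[0,a]$ (a two-step function when $g^*$ has a jump, a small Lipschitz bump of mean zero otherwise), not identically zero, with $\int_0^a k\,d\varphi=0$ and such that $g^*\pm tk$ stays nonnegative and decreasing for all small $t$; non-constancy of $g^*$ guarantees that such a $k$ exists. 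Setting $h=k\circ\sigma$, the functions $g\pm th=(g^*\pm tk)\circ\sigma$ are equimeasurable with the decreasing functions $g^*\pm tk$, whence $\|g\pm th\|_{\Lambda(\varphi)}=\int_0^a(g^*\pm tk)\,d\varphi=1$. Then $f=\tfrac12\big(\varepsilon(g+th)+\varepsilon(g-th)\big)$ exhibits $f$ as a nontrivial convex combination of two norm-one elements, so $f$ is not extreme.

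For the converse, I assume $\varphi$ strictly concave and $f=\tfrac{\varepsilon}{\varphi(c)}\chi_E$ with $c=m(E)$ (so $\|f\|_{\Lambda(\varphi)}=1$), and take $f=\tfrac12(f_1+f_2)$ with $\|f_i\|_{\Lambda(\varphi)}\le1$; put $\phi_i=|f_i|$. Since $|f|\le\tfrac12(\phi_1+\phi_2)$ pointwise, the chain $1=\||f|\|\le\|\tfrac12(\phi_1+\phi_2)\|\le\tfrac12\|\phi_1\|+\tfrac12\|\phi_2\|\le1$ consists of equalities. The outer equality together with strict monotonicity of the norm (Lemma \ref{0}) gives $|f|=\tfrac12(\phi_1+\phi_2)$ a.e.; in particular $\phi_1=\phi_2=0$ off $E$, and on $E$ the complex identity $|f_1+f_2|=|f_1|+|f_2|$ forces $f_1,f_2$ to share a common phase. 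It remains to show $\phi_1=\phi_2=1/\varphi(c)$ on $E$, after which $f_1=f_2=f$ follows.

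The main work, and the step where strict concavity is essential, is the inner equality $\|\phi_1+\phi_2\|=\|\phi_1\|+\|\phi_2\|$. Using $\int_0^t(\phi_1+\phi_2)^*\le\int_0^t(\phi_1^*+\phi_2^*)$ (subadditivity of $w\mapsto\int_0^t w^*$), with equality at $t=a$, set $P(t)=\int_0^t\big(\phi_1^*+\phi_2^*-(\phi_1+\phi_2)^*\big)\ge0$, so $P(0)=P(a)=0$. A Stieltjes integration by parts gives $\|\phi_1\|+\|\phi_2\|-\|\phi_1+\phi_2\|=\int_0^a P\,d(-\varphi')\ge0$, and since $\varphi$ is strictly concave the measure $-d\varphi'$ charges every subinterval, so equality forces $P\equiv0$, i.e.\ $(\phi_1+\phi_2)^*=\phi_1^*+\phi_2^*$. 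As $\phi_1+\phi_2=\tfrac2{\varphi(c)}\chi_E$ is constant on $E$, its rearrangement is constant on $[0,c]$, so $\phi_1^*+\phi_2^*$ is constant there; being a sum of two decreasing functions that is constant, each of $\phi_1^*,\phi_2^*$ is constant, whence $\phi_1,\phi_2$ are constant on $E$. Normalisation gives $\phi_i=1/\varphi(c)$, and with the common phase this forces $f_1=f_2=f$. I expect this equality-case analysis to be the crux, with strict concavity entering precisely to exclude the flat directions along which the norm would remain affine.
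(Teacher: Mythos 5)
The paper does not actually prove Theorem \ref{t-0}; it imports it from \cite{carothers-dilworth-trautman}, \cite{carothers-turett} and \cite{semenov}, so there is no in-paper argument to compare with, and your proposal must stand on its own. Your sufficiency part (strictly concave $\varphi$) is correct: the chain of equalities, the reduction to $(\phi_1+\phi_2)^*=\phi_1^*+\phi_2^*$ (you in effect reprove Lemma \ref{l-3}, which the paper cites from \cite{CC}), the observation that two decreasing functions whose sum is constant on $[0,c)$ are each constant there, and the normalization $\|\phi_i\|_{\Lambda(\varphi)}=1$ do force $f_1=f_2=f$. In the integration by parts you should check that the boundary term $P(t)\varphi'(t)$ tends to $0$ as $t\to0^+$ even when $\varphi'(0^+)=\infty$ (use $P(t)\varphi'(t)\le\int_0^t(\phi_1^*+\phi_2^*)\varphi'\,ds$), but that is routine.

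The genuine gap is in the necessity part, in the construction of $k$. Your recipe --- ``a two-step function when $g^*$ has a jump, a small Lipschitz bump of mean zero otherwise'' --- does not in general keep $g^*\pm tk$ decreasing. A two-step perturbation $\alpha\chi_{[t_0-\delta,t_0)}-\beta\chi_{[t_0,t_0+\delta)}$ makes $g^*+tk$ jump \emph{upward} at $t_0-\delta$ unless $g^*$ itself jumps there, and at the right edge one needs either another jump of $g^*$ or $g^*$ bounded away from zero; a Lipschitz bump destroys monotonicity on any interval where $g^*$ is constant, which can happen even for continuous non-constant $g^*$ (e.g.\ $g^*=1-C$ with $C$ the Cantor function). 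Also, the factorization $g=g^*\circ\sigma$ with $\sigma$ measure preserving is Ryff's theorem and should be cited. Both difficulties disappear if you perturb values rather than positions: take $h=\eta(|f|)\,\varepsilon$ with $\eta:[0,\infty)\to\R$ Lipschitz, $\eta(0)=0$, $|\eta(x)|\le x$ and $|\eta'|\le1$. Then $|f\pm sh|=\psi_{\pm s}(|f|)$ with $\psi_{\pm s}(x)=x\pm s\eta(x)$ increasing for $|s|<1$, so that $(f\pm sh)^*=\psi_{\pm s}(f^*)=f^*\pm s\,\eta(f^*)$ automatically, and
$$
\|f\pm sh\|_{\Lambda(\varphi)}=1\pm s\int_0^a\eta(f^*(\tau))\,d\varphi(\tau).
$$
If $|f|$ is not a.e.\ constant on its support, one can choose such an $\eta$ with $\int_0^a\eta(f^*)\,d\varphi=0$ and $\eta(|f|)\not\equiv0$ (combine two bumps supported near two distinct essential values of $|f|$), which produces the required nondegenerate segment without any reference to $\sigma$. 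With this replacement your argument is complete.
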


Our first main result refines Theorem \ref{t-0}
in the case when the function $\varphi$, in general, fails to be strictly concave, showing that every measurable function $f:\,[0,2\pi]\to\mathbb{C}$ such that $|f|=1$ a.e. on $[0,2\pi]$ is an extreme point of the unit ball of the space $\Lambda(\varphi)$ if $\varphi$ is not linear and strictly increasing. This result
can be compared with the easy fact that the set of extreme points of the unit ball of the space $L^1$ (which coincides with $\Lambda(\varphi_0)$, where $\varphi_0(t)=t/(2\pi)$), is empty. As a consequence, we get that $H^1:={\h}$ is a unique space in the class of Hardy-Lorentz spaces ${\hf}$, where $\varphi$ is a strictly increasing concave function on $[0,2\pi]$, for which every extreme point of the unit ball is a normed outer function.

In the recent note \cite{CC}, it was proved that every inner  function is an extreme point of the unit ball of ${\hf}$ whenever $\varphi$ is  strictly increasing and strictly  concave on $[0,2\pi]$. 
However, one can easily see that this result is an immediate consequence of Theorem \ref{t-0}.  Indeed, for each inner function $f$ we certainly have  $|\tilde{f}|=1$ a.e. on $[0,2\pi]$. Therefore, $\tilde{f}$ is an extreme point of the unit ball of the space $\Lambda(\varphi)$. Since mapping \eqref{map} 
includes ${\hf}$ into $\Lambda(\varphi)$ as an isometric subspace, it follows then that $f$ is an extreme point of the unit ball of the space ${\hf}$.

The second result of the paper resolves the above Semenov's problem for the class of functions $f\in H(\Lambda(\varphi))$ such that $|\tilde{f}|$ is constant on some subset of positive measure of $[0,2\pi]$. More precisely, we prove that such a normed function is an extreme point of the unit ball of ${\hf}$ if $\varphi$ is strictly increasing and strictly  concave on $[0,2\pi]$. Note that this class contains, in particular,  functions which are being neither inner, nor outer ones. 

In a subsequent paper\footnote{S.V. Astashkin, {\it On the set of extreme points of the unit ball of a Hardy-Lorentz space, 2} (manuscript).}, we investigate a different case when $f$ is a normed function in $H(\Lambda(\varphi))$ such that $m\{t:\,|f(e^{it})|=c\}=0$ for each $c\ge 0$ and $f(a)=0$ for some point $a\in\mathbb{D}$.


It is worth also to say briefly about resent investigations of the structure of extreme points of the unit ball for other types of function spaces which can be treated as a generalization of the classical $H^1$. We mention here only the paper \cite{dyak}, where a similar study was carried out for a punctured Hardy space $H^1_K$ consisting of the integrable functions $f$ on the unit circle whose Fourier coefficients $c_k(f)$ vanish either $k<0$ or $k\in K$ ($K$ is a fixed finite set of positive integers). 

The author thanks Professors A. Baranov, G. Curbera and I. Kayumov for useful discussions related to matters considered in the paper. 



\section{Preliminaries.}
\label{S1}


\subsection{Symmetric spaces.}

A detailed exposition of the theory of symmetric spaces see in \cite{bennett-sharpley,krein-petunin-semenov,LT2}.

Let $a>0$. A Banach space $X:=X[0,a]$ of complex-valued Lebesgue-measurable functions on
the measure space $([0,a],m)$, where $m$ is the Lebesgue measure on the interval $[0,a]$, is called {\it symmetric} (or {\it rearrangement invariant}) if from the conditions $y \in X$ and
$x^*(t)\le y^*(t)$ almost everywhere (a.e.) on $[0,a]$ it follows that $x\in X$ and ${\|x\|}_X \le {\|y\|}_X.$ Here, $x^*(t)$ is the right-continuous nonincreasing {\it rearrangement} of $|x|$, i.e., 
$$
x^{*}(t):=\inf \{ \tau\ge 0:\,m\{s\in [0,a]:\,|x(s)|>\tau\}\le t \},\;\;0<t\le a.$$
Measurable functions  $x$ and $y$ on $[0,a]$ are said to be {\it equimeasurable} whenever 
$$
m\{s\in [0,a]:\,|x(s)|>\tau\}=m\{s\in [0,a]:\,|y(s)|>\tau\},\;\;\tau>0.$$
In particular, a complex-valued measurable function $x(t)$ and its rearrangement $x^*(t)$ are  equimeasurable. Clearly, if $X$ is a symmetric space, $y \in X$ and $x$ and $y$ are equimeasurable on $[0,a]$, then $x\in X$ and $\|x\|_X=\|y\|_X.$ 

Every symmetric space $X$ on $[0,a]$ satisfies  the embeddings $L^\infty[0,a]{\subset} X{\subset} L^1[0,a]$.


The family of symmetric spaces includes many classical spaces 
appearing in analysis, in particular, $L^p$-spaces with $1\le p\le\infty$, which are defined in  the usual way, Orlicz, Lorentz, Marcinkiewicz spaces and many others. Next, we will be interested primarily in Lorentz spaces.

In what follows, $\varphi$ is an increasing, concave,  continuous function on $[0,a]$ such that $\varphi(0)=0$ and $\varphi(a)=1$.
Then (see Section \ref{S0}), the {\it Lorentz space} $\Lambda(\varphi):=\Lambda(\varphi)[0,a]$ is equipped with norm  \eqref{eqLor}.
In particular, if $\varphi(t)=a^{-1/p}t^{1/p}$, $1<p<\infty$, this space is denoted, as usual, by $L^{p,1}$ (see, e.g., \cite[\S\,4.4]{bennett-sharpley} or \cite[\S\,II.6.8]{krein-petunin-semenov}).  

For every $\varphi$, $\Lambda(\varphi)$ is a separable symmetric space. Moreover, such a space has the Fatou property, that is, the conditions $x_n\in \Lambda(\varphi)$, $n=1,2,\dots,$ $\sup_{n=1,2,\dots}\|x_n\|_{\Lambda(\varphi)}<\infty$, and $x_n\to{x}$ a.e. on $[0,a]$ imply that $x\in \Lambda(\varphi)$ and $\|x\|_{\Lambda(\varphi)}\le \liminf_{n\to\infty}\|x_n\|_{\Lambda(\varphi)}.$
 
\subsection{Hardy-type spaces of analytic functions.}

Let $X$ be a symmetric space with the Fatou property on the unit circle
$$
\T:=\{e^{it}:\, 0\le t\le 2\pi\},$$ 
which will be identified further with the interval $[0,2\pi]$. The Hardy-type space $\he$ consists of all $f\colon\D\to\mathbb{C}$, where $\D:=\{z\in \mathbb{C}:\,|z|<1\}$, such that
\begin{equation*}
\|f\|_{\he}:=\sup_{0\le r<1}\|f_r\|_X<\infty,
\end{equation*}
where $f_r(e^{it}):=f(re^{it})$ if $0\le r<1$ and $t\in[0,2\pi]$ (see \cite{bryskin-sedaev}). In particular, if $X=L^p$, $1\le p\le\infty$, we get the classical $H^p$-spaces, $H^p=H(L^p)$  \cite{duren,hoffman,koosis}.  

Since $X\subset L^1$ for every symmetric space $X$, we have $\he\subseteq H^1$. Consequently, for each function $f\in\he$ and almost all $t\in[0,2\pi]$ there exists the nontangential limit $f(e^{it}):=\lim_{z\to e^{it}}f(z)$. Moreover, precisely as in the classical $H^p$ case, the mapping \eqref{map} 
is an isometry between $\he$ and the subspace of $X$, consisting of all functions $g\in X$ such that the Fourier coefficients 
$$
c_k(g):=\frac{1}{2\pi}\int_0^{2\pi} g(t)e^{-ikt}\,dt
$$
vanish if $k<0$.



Recall the canonical (inner-outer) factorization theorem for $H^1$ functions. A function $I\in H^\infty$ is called {\it inner} if 
$|\tilde{I}|=1$ a.e. on $[0,2\pi]$. Also, a non-null function $F\in H^1$ is termed {\it outer} if
$$
\ln |F(0)|=\frac{1}{2\pi}\int_{0}^{2\pi}\ln |\tilde{F}(s)|\,ds.$$
As is well known, every function $f\in H^1$, $f\not\equiv 0$, can be represented as $f=IF$, where $I$ is inner and $F$ is outer (see, for  instance, \cite[\S\,IV.D.$4^0$]{koosis}).

Let $g\in H(X)$, where $X$ is a symmetric space on $[0,2\pi]$. Since  $g\in H^1$, we have
$$
\int_{0}^{2\pi}|\ln|\tilde{g}(t)||\,dt<\infty$$
(see, for instance, \cite[$\S\,IV.D.4^0$]{koosis}). Conversely, if $\mu\in X$, $\mu\ge 0$, and $\ln\mu\in L^1$, then there are functions $g\in H(X)$ such that 
\begin{equation}
\label{1a-dop}
|\tilde{g}|=\mu.
\end{equation}
Indeed, for example, the outer function
$$
F(z):=\exp\left\{\frac{1}{2\pi}\int_0^{2\pi}\frac{e^{it}+z}{e^{it}-z}\ln\mu(t)\,dt+i\lambda\right\},\;\;z\in\mathbb{D},$$
where $\lambda\in\mathbb{R}$, satisfies condition \eqref{1a-dop} \cite[$\S\,IV.D.4^0$]{koosis} and hence belongs to $H(X)$.

\subsection{Extreme points and some related definitions.}

Given Banach space $X=(X, \|\cdot\|)$, denote by ${\rm ball}\,(X)$ the closed unit ball of $X$, that is,
$$
{\rm ball}\,(X):=\{x\in X:\,\|x\|\le 1\}.$$
A point $x_0\in {\rm ball}\,(X)$ is said to be {\it extreme} for the set ${\rm ball}\,(X)$ if $x_0$ is not the midpoint of any nondegenerate segment contained in ${\rm ball}\,(X)$.  

Clearly, every extreme point $x_0$ of ${\rm ball}\,(X)$ is a normed element, i.e., $\|x_0\|=1$. Moreover, $x_0$ is an extreme point of ${\rm ball}\,(X)$ if and only if from $y\in X$ and $\|x\pm y\|_X\le 1$ it follows that $y=0$. 



We say that a function $\varphi:\, [0,a]\to \mathbb{R}$ is {\it increasing} (resp. {\it strictly increasing}) if from $0\le t_1<t_2\le a$ it follows $\varphi(t_1)\le \varphi(t_2)$ (resp. $\varphi(t_1)< \varphi(t_2)$). A (strictly) decreasing function is defined similarly.

A real-valued function $\varphi$ defined  on $[0,a]$ is  called {\it strictly concave} if 
$$
\varphi((1-\alpha)t_1+\alpha t_2)<(1-\alpha)\varphi(t_1)+\alpha \varphi(t_2)$$
for any $\alpha\in (0,1)$ and $0\le t_1,t_2\le a$, $t_1\ne t_2$.

\section{Auxiliary results}
\label{S2}


The proof of the following assertion is immediate and hence we skip it.

\begin{lemma}
\label{0}
If $\varphi$ is a strictly increasing, concave function on $[0,a]$,  $\varphi(0)=0$, $\varphi(a)=1$, then the norm of the Lorentz space $\Lambda(\varphi)$ is strictly monotone. 
\end{lemma}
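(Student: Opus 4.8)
The plan is to reduce the strict monotonicity of the norm to a statement about decreasing rearrangements. Recall that $\|x\|_{\laf}=\int_0^a x^*(t)\,d\varphi(t)$, where $d\varphi$ denotes the Lebesgue--Stieltjes measure generated by $\varphi$. The key structural fact is that, since $\varphi$ is continuous and strictly increasing, $d\varphi$ is a finite positive measure assigning strictly positive mass to every nondegenerate subinterval of $[0,a]$. The plain monotonicity $\|x\|_{\laf}\le\|y\|_{\laf}$ under $|x|\le|y|$ a.e.\ is immediate from the standard fact that then $x^*(t)\le y^*(t)$ for every $t$, so the entire content of the lemma is the strictness.

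First I would exploit the hypothesis $m\{t:\,|x(t)|<|y(t)|\}>0$ to produce a genuine gap between the two distribution functions. Writing $\{|x|<|y|\}=\bigcup_{q\in\Q,\,q>0}\{|x|<q<|y|\}$ and using countable subadditivity, there is a rational $q_0>0$ with $m\{t:\,|x(t)|<q_0<|y(t)|\}>0$. Since $|x|\le|y|$ gives $\{|x|>q_0\}\subseteq\{|y|>q_0\}$, while the set just found lies in $\{|y|>q_0\}\setminus\{|x|>q_0\}$, I obtain the strict inequality of distribution functions
\[
t_0:=m\{t:\,|x(t)|>q_0\}<m\{t:\,|y(t)|>q_0\}=:t_1 .
\]
Next I would transfer this gap to the rearrangements. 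Using the identity $\{u^*>q_0\}=[0,\,m\{|u|>q_0\})$, valid for the right-continuous rearrangement of any $u$, I get $x^*(t)\le q_0<y^*(t)$ for every $t\in(t_0,t_1)$; thus $x^*<y^*$ on an interval of positive length. Since $\varphi$ is strictly increasing and continuous, $d\varphi\big((t_0,t_1)\big)=\varphi(t_1)-\varphi(t_0)>0$, and therefore
\[
\|y\|_{\laf}-\|x\|_{\laf}=\int_0^a\big(y^*(t)-x^*(t)\big)\,d\varphi(t)\ge\int_{(t_0,t_1)}\big(y^*(t)-x^*(t)\big)\,d\varphi(t)>0 ,
\]
the last step because a function that is pointwise positive on $(t_0,t_1)$ has positive integral against a measure charging that interval.

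The hard part will be precisely the passage from the distributional gap to an honest interval on which $x^*<y^*$ holds strictly (rather than merely $x^*\le y^*$): a naive comparison only yields the non-strict inequality, which is why I single out the level $q_0$ and use the exact description $\{u^*>q_0\}=[0,m\{|u|>q_0\})$ of the super-level sets of the rearrangement. Everything else—monotonicity of rearrangements under $|x|\le|y|$, and continuity plus strict monotonicity of $\varphi$ forcing $d\varphi$ to charge intervals—is routine. I would also remark that concavity of $\varphi$ plays no role in the strict monotonicity itself; it is needed only to guarantee that $\|\cdot\|_{\laf}$ is a norm and $\laf$ a genuine symmetric space.

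As an alternative that sidesteps rearrangements, one may first establish, via Tonelli's theorem applied to the layer-cake representation of $x^*$, the formula $\|u\|_{\laf}=\int_0^\infty\varphi\big(m\{|u|>\tau\}\big)\,d\tau$, and then integrate the inequality $\varphi(m\{|x|>\tau\})<\varphi(m\{|y|>\tau\})$, which holds for all $\tau$ in a right neighbourhood of $q_0$ by right-continuity of the distribution functions together with strict monotonicity of $\varphi$.
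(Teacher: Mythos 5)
Your proof is correct. The paper itself gives no argument for this lemma (it is declared ``immediate'' and skipped), and what you have written is exactly the standard verification one would supply: locate a level $q_0$ at which the distribution functions of $|x|$ and $|y|$ differ, use the identity $\{u^*>q_0\}=[0,\,m\{|u|>q_0\})$ (which is precisely right for the right-continuous rearrangement as defined in the paper) to get $x^*<y^*$ on an interval of positive length, and then use that $d\varphi$ charges every nondegenerate interval because $\varphi$ is continuous and strictly increasing. Your closing remarks — that concavity enters only to make $\|\cdot\|_{\Lambda(\varphi)}$ a norm, and the alternative via $\|u\|_{\Lambda(\varphi)}=\int_0^\infty\varphi\bigl(m\{|u|>\tau\}\bigr)\,d\tau$ — are also accurate.
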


An inspection of the proof of de Leeuw-Rudin theorem in \cite[Theorem 1]{deleeuw-rudin} or \cite[Chapter~9]{hoffman} implies the following useful result (see also Lemma 1 in \cite{bryskin-sedaev}). 

\begin{lemma}
\label{l0}
Suppose the norm of a symmetric space $X$ on $[0,a]$ is strictly monotone. If $\|f\|_X=1$, $g\in X$, $m\{t:\,f(t)=0\}=0$, and 
$\|f\pm g\|_X\le 1$, then ${g}=h{f}$, where $h$ is a real-valued function, $|h|\le 1$ a.e. on $[0,a]$.
\end{lemma}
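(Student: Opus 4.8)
The plan is to use the hypothesis $m\{t:\,f(t)=0\}=0$ to write $g=hf$ with $h:=g/f$ a measurable complex-valued function defined a.e., so that the whole content of the statement reduces to the two assertions that $h$ is real-valued and that $|h|\le 1$ a.e. The device that extracts both facts at once is the auxiliary function
\[
\sigma:=\tfrac12\bigl(|f+g|+|f-g|\bigr).
\]
First I would record the pointwise triangle inequality $|f+g|+|f-g|\ge|(f+g)+(f-g)|=2|f|$, which gives $\sigma\ge|f|$ a.e., and, on the other hand, the norm estimate $\|\sigma\|_X\le\tfrac12\bigl(\|f+g\|_X+\|f-g\|_X\bigr)\le 1$, which follows from the triangle inequality in $X$ together with the symmetry of the norm ($\|\,|u|\,\|_X=\|u\|_X$) and the hypothesis $\|f\pm g\|_X\le1$.

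With these two facts in hand, strict monotonicity of the norm does the work. Since $|f|\le\sigma=|\sigma|$ a.e. while $\|\,|f|\,\|_X=\|f\|_X=1\ge\|\sigma\|_X$, the defining property of a strictly monotone norm forbids $|f|$ and $\sigma$ from differing on a set of positive measure; hence $\sigma=|f|$ a.e. Equivalently, equality holds a.e. in the pointwise triangle inequality above, i.e. $|f+g|+|f-g|=2|f|$ a.e.

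It remains to read off the structure of $h$ from this equality. Dividing through by $|f|\ne0$ and substituting $f\pm g=f(1\pm h)$, the equality becomes $|1+h|+|1-h|=2$ a.e. Writing $h=u+iv$ with $u,v$ real, I would compare $\sqrt{(1+u)^2+v^2}+\sqrt{(1-u)^2+v^2}$ with $|1+u|+|1-u|$: the former dominates the latter with equality only when $v=0$, while $|1+u|+|1-u|\ge2$ with equality exactly when $|u|\le1$. Forcing the total to equal $2$ therefore pins down $v=0$ and $|u|\le1$, that is, $h$ is real-valued with $|h|\le1$ a.e., as required. Geometrically this is just the statement that the locus $|1+h|+|1-h|=2$ is the degenerate ellipse consisting of the segment $[-1,1]$ joining its foci $\pm1$.

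The argument is short, and the delicate points are essentially bookkeeping. The main thing to get right is applying strict monotonicity in the correct direction: one needs the pointwise inequality $|f|\le\sigma$ to point opposite to the norm inequality $\|\sigma\|_X\le\|f\|_X$, so that a positive-measure discrepancy is genuinely excluded and the equality $\sigma=|f|$ is forced. The second, and only other, point of substance is the equality-case analysis of $|1+h|+|1-h|=2$, which is precisely where the realness of $h$ and the bound $|h|\le1$ emerge simultaneously.
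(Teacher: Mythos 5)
Your proof is correct and is essentially the argument the paper invokes by citation (the de Leeuw--Rudin/Bryskin--Sedaev lemma): one shows $\tfrac12(|f+g|+|f-g|)\ge |f|$ pointwise while its norm is at most $\|f\|_X=1$, forces equality a.e.\ via strict monotonicity, and then reads off $h=g/f\in[-1,1]$ from the degenerate ellipse $|1+h|+|1-h|=2$. No gaps; the measurability of $h=g/f$ and the use of $\||u|\|_X=\|u\|_X$ are both legitimate in a symmetric space.
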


A proof of the following result can be found in \cite[Lemma 6]{CC} (for the special case $\varphi(t)=a^{-1/p}t^{1/p}$, $1<p<\infty$, see also   \cite[Lemma 1]{carothers-turett}).

\begin{lemma}\label{l-3}
Let $\varphi$ be a strictly increasing and  strictly concave  
function on $[0,a]$ such that $\varphi(0)=0$, $\varphi(a)=1$. Assuming that  $f,g \in \Lambda(\varphi)$ satisfy 
$$\|f\|_{\Lambda(\varphi)} + \|g\|_{\Lambda(\varphi)}=\|f+g\|_{\Lambda(\varphi)},
$$
we have 
$$ f^* + g^*=(f+g)^*\;\;\mbox{a.e. on}\;[0,a]. 
$$
\end{lemma}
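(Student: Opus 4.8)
The plan is to reduce the norm equality to a pointwise equality of the \emph{integrated} rearrangements, and then to exploit strict concavity to upgrade an equality that holds only almost everywhere with respect to an auxiliary measure to one that holds everywhere. Throughout I would write $F(s):=\int_0^s f^*$, $G(s):=\int_0^s g^*$ and $H(s):=\int_0^s (f+g)^*$; each of these is a nondecreasing, concave, absolutely continuous function on $[0,a]$ vanishing at $0$. The first step is to recall the sublinearity of the maximal rearrangement, namely $H(s)\le F(s)+G(s)$ for every $s\in[0,a]$. This is immediate from the standard variational formula $\int_0^s x^*(t)\,dt=\sup\{\int_E|x|\,dm:\ m(E)\le s\}$ (see \cite[\S\,4.4]{bennett-sharpley}) together with the triangle inequality for $|f+g|$ under the integral sign.

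The second step is to rewrite the Lorentz norm so that $F,G,H$ enter linearly. Let $\psi:=\varphi'$ be the right derivative of $\varphi$; since $\varphi$ is continuous and concave it is absolutely continuous with $\varphi(t)=\int_0^t\psi$, and $\psi$ is nonnegative and nonincreasing, so $d\mu:=-d\psi$ is a nonnegative measure on $(0,a]$. Writing $\psi(t)=\psi(a)+\mu((t,a])$ and applying Fubini's theorem, I would obtain, for every $x\in\Lambda(\varphi)$ and $X(s):=\int_0^s x^*$,
\begin{equation}
\|x\|_{\Lambda(\varphi)}=\int_0^a x^*(t)\,\psi(t)\,dt=\psi(a)\,X(a)+\int_{(0,a]} X(u)\,d\mu(u). \tag{$\ast$}
\end{equation}
The virtue of $(\ast)$ is that it expresses the norm as an integral of the integrated rearrangement against the nonnegative weight $\psi(a)\,\delta_a+\mu$, thereby avoiding any difficulty caused by $\psi(0^+)$ being possibly infinite.

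The third step applies $(\ast)$ to $f$, $g$ and $f+g$ and compares. Because $H\le F+G$ (Step~1) while $\psi(a)\ge 0$ and $\mu\ge 0$, identity $(\ast)$ already recovers the triangle inequality; the assumed equality of norms therefore forces both $\psi(a)\big(F(a)+G(a)-H(a)\big)=0$ and $\int_{(0,a]}\big(F(u)+G(u)-H(u)\big)\,d\mu(u)=0$. Since the integrand is nonnegative, the latter yields $H(u)=F(u)+G(u)$ for $\mu$-almost every $u$. Now strict concavity enters decisively: it forces $\psi$ to be strictly decreasing (otherwise $\varphi$ would be affine on a subinterval), so $\mu$ assigns positive mass to every nonempty open subinterval of $(0,a)$. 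Hence the $\mu$-null set $\{H\ne F+G\}$ can contain no open interval, $\{H=F+G\}$ is dense, and by continuity of $F,G,H$ we get $H\equiv F+G$ on all of $[0,a]$. Differentiating this identity (all three functions are absolutely continuous) gives $(f+g)^*=f^*+g^*$ a.e. on $[0,a]$, as claimed.

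I expect the only delicate point to be the endpoint bookkeeping in $(\ast)$ — in particular ensuring that the possibly unbounded density $\psi$ near $0$ causes no trouble — which is exactly why I route the computation through the measure $\mu=-d\psi$ rather than through an integration by parts that would produce a spurious $\psi(0^+)$ term. The conceptual heart, however, is the final step: subadditivity of the maximal rearrangement yields equality only $\mu$-almost everywhere, and it is precisely the \emph{strictness} of the concavity, manifested as the full support of $\mu$ on $(0,a)$, that promotes this to the pointwise identity of the integrated rearrangements and hence of the rearrangements themselves. (This argument recovers \cite[Lemma~6]{CC}; for the power case $\varphi(t)=a^{-1/p}t^{1/p}$ it specializes to \cite[Lemma~1]{carothers-turett}.)
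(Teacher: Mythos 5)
Your proof is correct, and it is essentially the argument the paper relies on (the paper itself only cites \cite[Lemma 6]{CC} for this lemma, whose proof is reproduced in the draft material at the end of the source): rewrite the Lorentz norm as an integral of the integrated rearrangement against the nonnegative measure $-d\varphi'$, use subadditivity of $s\mapsto\int_0^s x^*$, and invoke the full support of $-d\varphi'$ coming from strict concavity to upgrade the $\mu$-a.e.\ equality to an identity of continuous functions. Your only deviation is a mild streamlining: you dispose of the boundary term $\varphi'(a)\bigl(F(a)+G(a)-H(a)\bigr)$ by noting it is nonnegative and must vanish, whereas the cited proof first uses strict monotonicity of the norm to get $|f+g|=|f|+|g|$ a.e.\ and hence $\|f+g\|_{L^1}=\|f\|_{L^1}+\|g\|_{L^1}$.
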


As is known (see, for instance, \cite[\S\,II.2, the proof of Property $7^0$]{krein-petunin-semenov}, for every function $x\in L_1[0,a]$ there exists a family of measurable subsets $\{E_t(x)\}_{0<t\le a}$ of the interval $[0,a]$ such that $m(E_t(x))=t$, $0<t\le a$, $E_{t_1}(x)\subset E_{t_2}(x)$ if $0<t_1<t_2\le a$, the embeddings   
\begin{equation}
\label{pra1}
\{s\in [0,a]:\,|x(s)|>x^*(t)\}\subset E_t(x)\subset \{s\in [0,a]:\,|x(s)|\ge x^*(t)\},\;\;0<t\le a,
\end{equation}
hold, and
\begin{equation}
\label{pra2}
\int_{E_t(x)}|x(s)|\,ds=\int_0^t x^*(s)\,ds,\;\;0<t\le a.
\end{equation}

\begin{prop}
\label{propos}
Let $\varphi$ be a strictly increasing and  strictly concave  
function on $[0,a]$, $\varphi(0)=0$, $\varphi(a)=1$. Suppose  $u,v \in \Lambda(\varphi)[0,a]$, $u$ is nonnegative, $v$  is real-valued, $\|u\|_{\Lambda(\varphi)}=1$ and $\|u\pm v\|_{\Lambda(\varphi)}\le 1$. 

Then both functions $u(s)\pm v(s)$  are nonnegative a.e. on the interval $[0,a]$ and there exists a family of measurable subsets $\{E_t\}_{0<t\le a}$ of $[0,a]$ which satisfies the following properties: $m(E_t)=t$, $0<t\le a$, $E_{t_1}\subset E_{t_2}$, $0<t_1<t_2\le a$,  
\begin{equation}
\label{pr1}
\int_{E_t}u(s)\,ds=\int_0^t u^*(s)\,ds,\;\;0<t\le a,
\end{equation}
and
\begin{equation}
\label{pr2-e}
\int_{E_t}(u(s)\pm v(s))\,ds=\int_0^t (u\pm v)^*(s)\,ds,\;\;0<t\le a.
\end{equation}
\end{prop}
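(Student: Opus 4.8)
The plan is to reduce everything to a single rearrangement identity and then exploit a maximality property of a well-chosen family. First I would extract from the hypotheses that $\|u+v\|_{\Lambda(\varphi)}=\|u-v\|_{\Lambda(\varphi)}=1$: writing $2u=(u+v)+(u-v)$ and using $\|u\|_{\Lambda(\varphi)}=1$ together with $\|u\pm v\|_{\Lambda(\varphi)}\le 1$, the triangle inequality gives $2=\|2u\|_{\Lambda(\varphi)}\le\|u+v\|_{\Lambda(\varphi)}+\|u-v\|_{\Lambda(\varphi)}\le 2$, so all the inequalities are equalities. In particular the additivity hypothesis of Lemma \ref{l-3} holds for $f=u+v$ and $g=u-v$, which yields the key identity $(u+v)^*+(u-v)^*=2u^*$ a.e. on $[0,a]$.

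Next I would take as the required family the family $E_t:=E_t(u)$ supplied by \eqref{pra1}--\eqref{pra2} for the nonnegative function $u$; then $m(E_t)=t$, the sets are nested, and \eqref{pr1} holds automatically by \eqref{pra2}. The heart of the argument is to show that this same family is simultaneously maximizing for $u+v$ and $u-v$. For this I would combine the pointwise identity $|u+v|+|u-v|=2\max(u,|v|)\ge 2u$ with the standard maximality bound $\int_{E}|w|\,ds\le\int_0^{m(E)}w^*\,ds$ and the integrated form of the key identity. Explicitly, for each fixed $t$ one obtains the chain $2\int_0^t u^*\,ds=\int_{E_t}2u\,ds\le\int_{E_t}(|u+v|+|u-v|)\,ds\le\int_0^t(u+v)^*\,ds+\int_0^t(u-v)^*\,ds=2\int_0^t u^*\,ds$, whose two ends coincide.

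From the forced equalities I would read off both conclusions. Equality in the first inequality gives $\int_{E_t}(\max(u,|v|)-u)\,ds=0$ with a nonnegative integrand, hence $|v|\le u$ a.e. on $E_t$; taking $t=a$, so that $E_a=[0,a]$ up to a null set, yields $|v|\le u$ a.e. on $[0,a]$, which is exactly the asserted nonnegativity of $u\pm v$. Equality in the second inequality, since the two summands are individually bounded by the corresponding integrals of the rearrangements, forces $\int_{E_t}|u\pm v|\,ds=\int_0^t(u\pm v)^*\,ds$; and because $u\pm v\ge 0$ a.e.\ one may drop the absolute values to obtain \eqref{pr2-e}.

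The main obstacle is the middle step: recognizing that the family maximizing the integral of $u$ must also maximize the integrals of $u+v$ and $u-v$. This is not automatic, and it is precisely where the rearrangement identity $(u+v)^*+(u-v)^*=2u^*$ (hence the strict concavity of $\varphi$, through Lemma \ref{l-3}) enters, allowing the two outer quantities in the chain to agree and thereby pinching all intermediate inequalities to equalities. A minor point to verify carefully is the elementary maximality inequality $\int_E|w|\,ds\le\int_0^{m(E)}w^*\,ds$, valid for every measurable set $E$.
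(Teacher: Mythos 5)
Your proof is correct. Its first half coincides with the paper's: both derive $\|u\pm v\|_{\Lambda(\varphi)}=1$, hence $\|u+v\|_{\Lambda(\varphi)}+\|u-v\|_{\Lambda(\varphi)}=\|2u\|_{\Lambda(\varphi)}$, and invoke Lemma \ref{l-3} to obtain $(u+v)^*+(u-v)^*=2u^*$ a.e. Where you genuinely diverge is in extracting the conclusions from this identity. The paper appeals to \cite[\S\,II.2, Property $9^0$]{krein-petunin-semenov}, according to which additivity of rearrangements is equivalent to the summands $u+v$ and $u-v$ having the same sign a.e.\ together with the possibility of choosing a common family $\{E_t\}$ for both; nonnegativity and \eqref{pr2-e} then drop out at once, and \eqref{pr1} follows by summing the two maximality identities. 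You instead fix the family $E_t:=E_t(u)$ attached to $u$ itself, so that \eqref{pr1} is automatic, and run a pinching argument based on the elementary identity $|u+v|+|u-v|=2\max(u,|v|)$ (valid since $u\ge 0$) and the Hardy--Littlewood bound $\int_E|w|\,ds\le\int_0^{m(E)}w^*\,ds$; the two ends of your chain agree by the integrated rearrangement identity, so all intermediate inequalities become equalities, yielding $|v|\le u$ a.e.\ (take $t=a$) and $\int_{E_t}|u\pm v|\,ds=\int_0^t(u\pm v)^*\,ds$, from which \eqref{pr2-e} follows after dropping the absolute values. Your route is more self-contained, replacing the external citation by a short squeeze that exploits the nonnegativity of $u$ and the symmetric roles of $u\pm v$, whereas the paper's reference to Property $9^0$ invokes a general equivalence; both arguments are sound and yield the proposition in full.
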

\begin{proof}
First, from the hypothesis of the proposition it follows that $\|u\pm v\|_{\Lambda(\varphi)}= 1$. Hence, 
$$
\|u+v\|_{\Lambda(\varphi)}+\|u-v\|_{\Lambda(\varphi)}=2=\|2u\|_{\Lambda(\varphi)}.$$
Thus, the functions $u+v$ and $u-v$ satisfy the conditions of Lemma  \ref{l-3}, and consequently
\begin{equation}
\label{pr3}
(u+v)^*+(u-v)^*=2u^*\;\;\mbox{a.e. on}\;[0,a].
\end{equation}
In view of \cite[\S\,II.2, Property $9^0$]{krein-petunin-semenov}, the last inequality is equivalent to the fact that the functions $u\pm v$ are of the same sign a.e. on $[0,a]$ and as families of subsets $\{E_t(u+v)\}_{0<t\le a}$ and $\{E_t(u-v)\}_{0<t\le a}$ (see the  discussion before the proposition),
can be chosen the same family $\{E_t\}_{0<t\le a}$. Since $u\ge 0$, this implies, in particular, that the both functions $u\pm v$ are a.e. nonnegative. Therefore, by \eqref{pra2}, we obtain \eqref{pr2-e}, that is, we have
\begin{equation*}
\int_{E_t}(u(s)+v(s))\,ds=\int_0^t (u+ v)^*(s)\,ds,\;\;0<t\le a,
\end{equation*}
and
\begin{equation*}
\int_{E_t}(u(s)-v(s))\,ds=\int_0^t (u- v)^*(s)\,ds,\;\;0<t\le a.
\end{equation*}
Summing up these equations and applying \eqref{pr3}, we get   
\begin{eqnarray*}
2\int_{E_t}u(s)\,ds&=&
\int_0^t ((u+v)^*(s)+(u-v)^*(s))\,ds\\&=&
2\int_0^t u^*(s)\,ds,\;\;0<t\le a,
\end{eqnarray*}
which implies \eqref{pr1}. This completes the proof.
\end{proof}

\begin{lemma}
\label{prop1a}
Let $a>0$ and let $\varphi$ be a nonlinear, strictly increasing, concave, continuous function on $[0,a]$ such that $\varphi(0)=0$ and $\varphi(a)=1$. Assuming that a function $\rho:\,[0,a]\to\mathbb{R}$ satisfies the condition: $0<m({\rm supp}\,\rho)<a$, we have  
$$
\int_0^a\rho^*(t)\,d\varphi(t)>-\int_0^a\rho^*(t)\,d\varphi(a-t).$$
\end{lemma}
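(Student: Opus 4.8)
The plan is to reduce the claimed inequality to the positivity of a single integral and then establish that positivity by folding the integral about the midpoint $a/2$ and carrying out a sign analysis that uses both hypotheses. First I would pass from Stieltjes integrals to densities. Since $\varphi$ is increasing, concave and continuous with $\varphi(0)=0$, it is absolutely continuous on $[0,a]$ with a nonnegative, nonincreasing density $\varphi'$ for which $\int_0^a\varphi'(t)\,dt=1$, so that $d\varphi=\varphi'(t)\,dt$. The differential of the function $t\mapsto\varphi(a-t)$ is $-\varphi'(a-t)\,dt$, hence $-\int_0^a\rho^*(t)\,d\varphi(a-t)=\int_0^a\rho^*(t)\varphi'(a-t)\,dt$. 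Therefore the assertion is equivalent to $J>0$, where
\[
J:=\int_0^a\rho^*(t)\bigl[\varphi'(t)-\varphi'(a-t)\bigr]\,dt.
\]

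Next I would symmetrize. Put $\Delta(t):=\varphi'(t)-\varphi'(a-t)$. Because $\varphi'$ is nonincreasing, $\Delta$ is nonincreasing, and plainly $\Delta(a-t)=-\Delta(t)$, so $\Delta\ge 0$ on $[0,a/2]$ and $\Delta\le 0$ on $[a/2,a]$. Folding the integral by the substitution $t\mapsto a-t$ on $[a/2,a]$ and using the antisymmetry of $\Delta$ gives
\[
J=\int_0^{a/2}\bigl[\rho^*(t)-\rho^*(a-t)\bigr]\Delta(t)\,dt.
\]
Since $\rho^*$ is nonincreasing and $t\le a-t$ on $[0,a/2]$, the bracket is nonnegative there; as $\Delta\ge 0$ on $[0,a/2]$ as well, this already yields $J\ge 0$.

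The main point, and the only delicate step, is strict positivity: I must exhibit a set of positive measure on which the integrand $[\rho^*(t)-\rho^*(a-t)]\Delta(t)$ is strictly positive, and this is exactly where the two hypotheses enter. For the first factor, let $c:=m({\rm supp}\,\rho)\in(0,a)$; then $\rho^*(t)>0$ for $t<c$ and $\rho^*(t)=0$ for $t\ge c$, so for every $t<\min(c,a-c)$ one has $\rho^*(t)>0=\rho^*(a-t)$, i.e. the first factor is strictly positive on the interval $[0,\min(c,a-c))$ of positive length. For the factor $\Delta$, I would introduce $\Phi(t):=\varphi(t)+\varphi(a-t)-1$, which is concave, symmetric about $a/2$, vanishes at $0$ and $a$, and satisfies $\Phi'=\Delta$. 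If $\Phi\equiv 0$ then $\varphi'(t)=\varphi'(a-t)$, which by monotonicity of $\varphi'$ forces $\varphi$ to be linear; since $\varphi$ is nonlinear, $\Phi\not\equiv 0$, and a concave function that vanishes at both endpoints but is not identically zero is strictly positive on $(0,a)$. Consequently $\Phi>0$ immediately to the right of $0$, so $\Delta=\Phi'$ cannot vanish on any interval $(0,\varepsilon)$, and since $\Delta$ is nonincreasing we obtain $\Delta>0$ on some interval $(0,t^*)$ with $t^*>0$. Intersecting with the previous interval, the integrand is strictly positive on $(0,\min(c,a-c,t^*))$, a set of positive measure, whence $J>0$.

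The only obstacle requiring genuine care is this last overlap argument: one must verify that the set where $\rho^*(t)>\rho^*(a-t)$ and the set where $\varphi'(t)>\varphi'(a-t)$ actually meet in positive measure, and this is where both hypotheses are indispensable. The clean way around it is the observation that, thanks to the monotonicity of $\rho^*$ and of $\Delta$, both sets may be taken to be intervals with left endpoint $0$, so their intersection is automatically a nondegenerate interval. A minor technical point, easily handled, is that $\varphi'$ is defined only almost everywhere; one works throughout with the monotone right derivative of $\varphi$, which changes none of the integrals.
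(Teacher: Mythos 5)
Your proof is correct, and it follows a genuinely different route from the paper's. Both arguments reduce the claim to the strict positivity of $J=\int_0^a\rho^*(t)\bigl[\varphi'(t)-\varphi'(a-t)\bigr]\,dt$, but they diverge from there. The paper splits $[0,a]$ at a small $\delta<a-b$ (with $b=m({\rm supp}\,\rho)$) chosen so that $\varphi'(t)>\varphi'(a-t)$ on $(0,\delta)$, gets strict inequality on that first piece, and on $[\delta,a]$ uses the vanishing of $\rho^*$ beyond $b$ together with a Hardy--Littlewood-type rearrangement inequality, comparing $\varphi'(a-t)$ on $[\delta,b]$ with its decreasing rearrangement $\varphi'(a+t-b-\delta)$. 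You instead fold the integral about $a/2$ using the antisymmetry of $\Delta(t)=\varphi'(t)-\varphi'(a-t)$, arriving at $J=\int_0^{a/2}\bigl[\rho^*(t)-\rho^*(a-t)\bigr]\Delta(t)\,dt$ with a pointwise nonnegative integrand, and you obtain strictness by noting that each factor is positive on an initial interval --- the first on $(0,\min(b,a-b))$ because $0<b<a$, the second on some $(0,t^*)$ because nonlinearity makes the concave function $\Phi(t)=\varphi(t)+\varphi(a-t)-1$ nontrivial --- so the two intervals necessarily overlap. Your version avoids the rearrangement inequality entirely and makes transparent exactly where each hypothesis enters, at the modest cost of the folding identity; the paper's version is less symmetric but needs no discussion of where the two ``good sets'' meet. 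The points you flag as delicate (the overlap argument via monotonicity of both factors, the a.e.\ definition of $\varphi'$, and the implication that $\Phi\equiv 0$ forces $\varphi$ to be linear) are all handled correctly.
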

\begin{proof}
First, we put $b:=m({\rm supp}\,\rho)$. Since the function $\varphi$ is concave, the derivative $\varphi^\prime(t)$ exists a.e. on $[0,a]$ and decreases. Moreover, by the assumption, $\varphi$ is not linear, and hence $\varphi^\prime(t)$ is not constant on $[0,a]$. Therefore, since $b<a$, there exists $0<\delta<a-b$ such that for all $0<t<\delta$ we have
$$
\varphi^\prime(a-t)<\varphi^\prime(t).$$
Because $\rho\ne 0$, this implies that
\begin{equation}
\label{fist ine}
\int_0^\delta \rho^*(t)\varphi^\prime(t)\,dt>\int_0^\delta \rho^*(t)\varphi^\prime(a-t)\,dt.
\end{equation}

Next, from the inequality $a>b+\delta$ it follows
$$
\varphi^\prime(a+t-b-\delta)\le \varphi^\prime(t),\;\;\delta< t<b.$$
Moreover, the increasing function $t\mapsto \varphi^\prime(a-t)$ and decreasing function $t\mapsto \varphi^\prime(a+t-b-\delta)$ are equimeasurable, when restricted to the interval $[\delta,b]$.
Thus, in view of the equality $\rho'(t)=0$, $t\in(b,a]$, and a well-known property of the decreasing rearrangement (see, for instance, \cite[\S\,II.2, Property $16^0$]{krein-petunin-semenov}), we have
\begin{eqnarray*}
\int_\delta^a \rho^*(t)\varphi^\prime(t)\,dt &=& \int_\delta^b \rho^*(t)\varphi^\prime(t)\,dt\\ &\ge& \int_\delta^b \rho^*(t)\varphi^\prime(a+t-b-\delta)\,dt\\ &\ge& \int_\delta^b \rho^*(t)\varphi^\prime(a-t)\,dt\\ &=& \int_\delta^a \rho^*(t)\varphi^\prime(a-t)\,dt.
\end{eqnarray*}
Combining this together with \eqref{fist ine}, we obtain
\begin{eqnarray*}
\int_0^a \rho^*(t)\varphi^\prime(t)\,dt &=& \int_0^\delta \rho^*(t)\varphi^\prime(t)\,dt+\int_\delta^a \rho^*(t)\varphi^\prime(t)\,dt\\ &>& \int_0^\delta \rho^*(t)\varphi^\prime(a-t)\,dt+\int_\delta^a \rho^*(t)\varphi^\prime(a-t)\,dt\\ &=&\int_0^a \rho^*(t)\varphi^\prime(a-t)\,dt=-
\int_0^a \rho^*(t)\,d\varphi(a-t),
\end{eqnarray*}
and thus everything is done.
\end{proof}

\begin{remark}
The simple example $\rho(t)=1$, $0\le t\le a$ shows that the assumption $m({\rm supp}\,\rho)<a$ in the last lemma cannot be skipped.
\end{remark}

\section{Main results}
\label{S3}


Our first result refines Theorem \ref{t-0} in the case when a concave function $\varphi$, in general, fails to be strictly concave. As a consequence, we get that the classical $H^1$ is a unique space in the class of Hardy-Lorentz spaces ${\hf}$, with strictly increasing, concave $\varphi$, such that each extreme point of the unit ball of ${\hf}$ is a normed outer function.
 

\begin{theorem}\label{Th1a}
Suppose $\varphi$ is a nonlinear, strictly increasing, concave, continuous function on $[0,a]$ such that $\varphi(0)=0$, $\varphi(a)=1$.
Then, every measurable function $f:\,[0,a]\to\mathbb{C}$ such that $|f|=1$ a.e. on $[0,a]$ is an extreme point of the unit ball of the space $\Lambda(\varphi)$.
\end{theorem}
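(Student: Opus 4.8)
The plan is to verify the extreme-point criterion recorded in Section~\ref{S1}. Since $|f|=1$ a.e.\ gives $f^{*}\equiv 1$ and hence $\|f\|_{\Lambda(\varphi)}=\int_0^a d\varphi=1$, it suffices to show that whenever $g\in\Lambda(\varphi)$ satisfies $\|f\pm g\|_{\Lambda(\varphi)}\le 1$, then $g=0$. First I would invoke Lemma~\ref{0} to guarantee that the norm of $\Lambda(\varphi)$ is strictly monotone, and then apply Lemma~\ref{l0} with $X=\Lambda(\varphi)$: because $m\{t:\,f(t)=0\}=0$, it yields $g=hf$ for a real-valued $h$ with $|h|\le 1$ a.e. Writing $\rho:=h$, the hypothesis $|f|=1$ turns the two competitors into nonnegative functions, $|f+g|=1+\rho$ and $|f-g|=1-\rho$.

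Next I would extract the crucial balance identity. From $\|2f\|_{\Lambda(\varphi)}=2$ and the triangle inequality, $\|f+g\|+\|f-g\|\ge\|2f\|=2$, while each summand is at most $1$; hence both norms equal $1$ and their sum is exactly $2$. To read off what this forces on $\rho$, let $r$ denote the decreasing rearrangement of $\rho$ taken as a signed function (rearranging its actual values, not those of $|\rho|$). Since $1\pm\rho\ge 0$, adding a constant commutes with this rearrangement, so $(1+\rho)^{*}=1+r$ and $(1-\rho)^{*}=1-r(a-\cdot)$. Substituting into $\|f+g\|+\|f-g\|=2$, using $\int_0^a d\varphi=1$, and applying the change of variable $t\mapsto a-t$ in one integral reduces the equality to
\begin{equation*}
\int_0^a r(t)\,\bigl[\varphi'(t)-\varphi'(a-t)\bigr]\,dt=0 .
\end{equation*}

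The heart of the argument, and the step I expect to be the main obstacle, is to show that this identity forces $r$, hence $\rho$, to be constant; this is exactly where nonlinearity of $\varphi$ enters. Set $\psi:=\varphi'-\varphi'(a-\cdot)$ and $\Psi(t):=\int_0^t\psi$. Since $\varphi'$ is decreasing, $\psi\ge 0$ on $[0,a/2]$ and $\psi\le 0$ on $[a/2,a]$, and $\int_0^a\psi=0$ gives $\Psi(0)=\Psi(a)=0$. The key point is that $\Psi(t)>0$ for \emph{every} $t\in(0,a)$: if $\Psi$ vanished at an interior point, monotonicity of $\varphi'$ would force $\varphi'(s)=\varphi'(a-s)$ on an initial (or terminal) segment and thence $\varphi'$ to be constant on all of $(0,a)$, contradicting that $\varphi$ is not linear. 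Granting this, an integration by parts (legitimate because $r$ is bounded and monotone while $\Psi$ is absolutely continuous and vanishes at both endpoints) rewrites the displayed identity as $-\int_0^a\Psi\,dr=0$; as $\Psi>0$ on $(0,a)$ and the monotone measure $dr$ is nonpositive, $dr$ must vanish on $(0,a)$, i.e.\ $r$ is constant, whence $\rho$ equals a constant $c$ a.e. This is a signed, sharpened form of Lemma~\ref{prop1a}: after subtracting $\inf r$ one may, in the case $0<m(\operatorname{supp})<a$, quote Lemma~\ref{prop1a} verbatim, the remaining full-support case being precisely what the integration by parts settles.

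Finally, with $\rho\equiv c$ one computes $\|f+g\|=\int_0^a(1+c)\,d\varphi=1+c$ and $\|f-g\|=1-c$; the constraints $\|f\pm g\|\le 1$ give $c\le 0$ and $c\ge 0$, so $c=0$ and $g=0$. This proves that $f$ is an extreme point. The only subtle bookkeeping is the use of the signed rearrangement $r$ (rather than the rearrangement of $|\rho|$) together with the verification that $\Psi>0$ throughout $(0,a)$, which is what encodes the nonlinearity hypothesis.
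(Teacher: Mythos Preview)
Your argument is correct and noticeably cleaner than the paper's. Both proofs begin identically, invoking Lemmas~\ref{0} and~\ref{l0} to reduce to $g=hf$ with real $h$, $|h|\le 1$, and then work with $\|1\pm h\|_{\Lambda(\varphi)}=1$. From there the two diverge. The paper splits $h$ according to the sign sets $E_\pm=\{h\gtrless 0\}$, derives the explicit formulae \eqref{equ3a}--\eqref{equ4a} for $\|1\pm h\|$ in terms of $(h\chi_{E_\pm})^*$, establishes $0<m(E_+)<a$ by a short case check, applies Lemma~\ref{prop1a} separately to $h\chi_{E_+}$ and $h\chi_{E_-}$, and finishes with a further dichotomy on which of the two Lorentz integrals is larger. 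Your route avoids the sign splitting entirely: passing to the \emph{signed} decreasing rearrangement $r$ of $h$ collapses both norm computations into one identity $\int_0^a r\,(\varphi'-\varphi'(a-\cdot))=0$, and a single integration by parts against the primitive $\Psi$ (strictly positive on $(0,a)$ exactly because $\varphi$ is nonlinear) forces $dr=0$, hence $h\equiv c$ and then $c=0$.

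What your approach buys is economy: no decomposition into $E_\pm$, no case analysis, and Lemma~\ref{prop1a} is effectively subsumed by the observation that $\Psi>0$ on $(0,a)$, whose proof you sketch correctly (if $\Psi(t_0)=0$ with $t_0\in(0,a/2]$, then $\varphi'(s)=\varphi'(a-s)$ a.e.\ on $[0,t_0]$, and monotonicity of $\varphi'$ squeezes $\varphi'$ to a constant on all of $(0,a)$). What the paper's approach buys is that it stays entirely within the standard nonnegative-rearrangement framework and quotes the prepared Lemma~\ref{prop1a} off the shelf. The only technical points worth recording explicitly in a write-up of your version are that concave continuous $\varphi$ with $\varphi(0)=0$ is absolutely continuous (so $d\varphi=\varphi'\,dt$ and $\psi\in L^1$), and that in the Stieltjes integration by parts the boundary terms vanish because $\Psi(0)=\Psi(a)=0$ while $r$ is bounded.
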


\begin{proof}
Assuming that $f$ is not an extreme point of the unit ball of the space $\Lambda(\varphi)$, we find a function $g\in \Lambda(\varphi)$, $g\ne 0$, such that
\begin{equation}\label{equ1a}
\|f\pm g\|_{\Lambda(\varphi)}=1.
\end{equation}
Since $\varphi$ is strictly increasing, $m\{t:\,f(t)=0\}=0$ and $\|f\|_{\Lambda(\varphi)}=1$, by Lemmas \ref{0} and \ref{l0}, we conclude that $g=h\cdot f$, where $h$ is a real-valued function, $|h|\le 1$ a.e. on $[0,a]$. Thus, from the assumption $|f|=1$ a.e. on $[0,a]$ and \eqref{equ1a} it follows that 
\begin{equation}\label{equ2a}
\|1\pm h\|_{\Lambda(\varphi)}=1.
\end{equation}

Setting
$$
E_+:=\{t\in [0,a]:\,h(t)>0\},\;\;E_-:=\{t\in [0,a]:\,h(t)<0\},$$
by the definition of the norm in a Lorentz space, we have
\begin{eqnarray}
\|1+h\|_{\Lambda(\varphi)}&=& \int_0^{m(E_+)}(1+h)^*(t)\,d\varphi(t)+\int_{a-m(E_-)}^{a}(1+h)^*(t)\,d\varphi(t)\nonumber\\ &+& \varphi(a-m(E_-))-\varphi(m(E_+)).
\label{equa base}
\end{eqnarray}

A direct calculation shows that 
$$
(1+h)^*(t)=(h\chi_{E_+})^*(t)+1,\;\;0\le t\le m(E_+).$$
and
$$
(1+h)^*(t)=1-(h\chi_{E_-})^*(a-t),\;\;a-m(E_-)\le t\le a.$$
Therefore,
\begin{eqnarray}
\|1+h\|_{\Lambda(\varphi)}&=& \varphi(m(E_+))+\int_0^{m(E_+)}(h\chi_{E_+})^*(t)\,d\varphi(t)\nonumber\\ &+& \int_{a-m(E_-)}^{a}(1-(h\chi_{E_-})^*(a-t))\,d\varphi(t)\nonumber\\ &+& \varphi(a-m(E_-))-\varphi(m(E_+))\nonumber\\&=& \int_0^{m(E_+)}(h\chi_{E_+})^*(t)\,d\varphi(t) + 1-\varphi(a-m(E_-))\nonumber\\ &-&\int_{a-m(E_-)}^{a}(h\chi_{E_-})^*(a-t)\,d\varphi(t)+\varphi(a-m(E_-))\nonumber\\ &=& 1+\int_0^{m(E_+)}(h\chi_{E_+})^*(t)\,d\varphi(t)+\int_0^{m(E_-)}(h\chi_{E_-})^*(t)\,d\varphi(a-t).
\label{equ3a}
\end{eqnarray}

Similarly, 
we have
\begin{equation}
\label{equ4a}
\|1-h\|_{\Lambda(\varphi)}=1+\int_0^{m(E_-)}(h\chi_{E_-})^*(t)\,d\varphi(t)+\int_0^{m(E_+)}(h\chi_{E_+})^*(t)\,d\varphi(a-t).
\end{equation}

Note that 
\begin{equation}
\label{equ4ab}
0<m(E_+)<a.
\end{equation}
Indeed, assume that $m(E_+)=0$ or $m(E_+)=a$. Then, one of the following conditions holds: (a) $m(E_+)=0$ and $m(E_-)>0$, (b) $m(E_-)=0$ and $m(E_+)>0$, (c) $m(E_+)=m(E_-)=0$.

In the case (a) we have 
$$
\int_0^{m(E_+)}(h\chi_{E_+})^*(t)\,d\varphi(t)=0$$
and
$$
\int_0^{m(E_-)}(h\chi_{E_-})^*(t)\,d\varphi(a-t)<0,$$
since $\varphi$ is strictly increasing and $h(t)<0$ if $t\in E_-$.
Consequently, by equation \eqref{equ3a}, we see that $\|1+h\|_{\Lambda(\varphi)}<1$.
But this impossible, because it contradicts condition \eqref{equ2a}.

In the case (b), arguing similarly, we obtain that $\|1+h\|_{\Lambda(\varphi)}>1$, which is impossible also in view of  \eqref{equ2a}.

Consider the case (c). The condition $m(E_+)=m(E_-)=0$ is equivalent to the fact that $h=0$ a.e. on $ [0,a]$. 
Then, $g=0$ a.e. on $[0,a]$, which contradicts the assumption. As a result, \eqref{equ4ab} is proved.
 


Next, in view of \eqref{equ4ab} and the definition of the sets $E_+$ and $E_-$, the functions  $\rho_1:=h\chi_{E_+}$ and $\rho_2:=h\chi_{E_-}$ satisfy the conditions of Lemma \ref{prop1a}. Therefore, we have 
\begin{equation}
\label{equ5a}
\int_0^{m(E_+)}(h\chi_{E_+})^*(t)\,d\varphi(t)>-\int_0^{m(E_+)}(h\chi_{E_+})^*(t)\,d\varphi(a-t).
\end{equation}
and
\begin{equation}
\label{equ6a}
\int_0^{m(E_-)}(h\chi_{E_-})^*(t)\,d\varphi(t)>-\int_0^{m(E_-)}(h\chi_{E_-})^*(t)\,d\varphi(a-t).
\end{equation}

Assume that
$$
\int_0^{m(E_+)}(h\chi_{E_+})^*(t)\,d\varphi(t)\ge \int_0^{m(E_-)}(h\chi_{E_-})^*(t)\,d\varphi(t).$$
Then from \eqref{equ6a} it follows
$$
\int_0^{m(E_+)}(h\chi_{E_+})^*(t)\,d\varphi(t)>-\int_0^{m(E_-)}(h\chi_{E_-})^*(t)\,d\varphi(a-t),$$
or
$$
\int_0^{m(E_+)}(h\chi_{E_+})^*(t)\,d\varphi(t)+\int_0^{m(E_-)}(h\chi_{E_-})^*(t)\,d\varphi(a-t)>0.$$
As a result, appealing to \eqref{equ3a}, we conclude that $\|1+h\|_{\Lambda(\varphi)}>1$, which contradicts \eqref{equ2a}.

In the case when
$$
\int_0^{m(E_+)}(h\chi_{E_+})^*(t)\,d\varphi(t)\le \int_0^{m(E_-)}(h\chi_{E_-})^*(t)\,d\varphi(t),$$
applying analogous arguments and \eqref{equ4a}, we conclude that $\|1-h\|_{\Lambda(\varphi)}>1$, which again contradicts \eqref{equ2a}. So, the proof is completed.

\end{proof}

\begin{corollary}\label{Cor0}
$L^1$ is a unique Lorentz space, whose unit ball has no extreme points. 
\end{corollary}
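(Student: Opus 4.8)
The plan is to read the corollary off from Theorem~\ref{Th1a} together with the classical degeneracy of $L^1$. The starting observation is that, among the admissible fundamental functions (strictly increasing, concave, continuous, with $\varphi(0)=0$ and $\varphi(a)=1$), the space $\laf$ coincides isometrically with $L^1[0,a]$ exactly when $\varphi$ is \emph{linear}, i.e. $\varphi(t)=t/a$; indeed, in that case $\|x\|_{\laf}=\tfrac1a\int_0^a x^*(t)\,dt=\tfrac1a\int_0^a|x(t)|\,dt$. Thus the corollary reduces to two claims: (i) the unit ball of $L^1$ has no extreme points, and (ii) whenever $\varphi$ is not linear, the unit ball of $\laf$ does possess an extreme point.

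For (i) I would use the standard non-atomicity splitting. If $\|f\|_{L^1}=1$, then $s\mapsto\int_0^s|f(t)|\,dt$ is continuous and increases from $0$ to $1$, so there is $c\in(0,a)$ with $\int_0^c|f|=\tfrac12$. Putting $g:=f\chi_{[0,c]}-f\chi_{[c,a]}$ gives $f+g=2f\chi_{[0,c]}$ and $f-g=2f\chi_{[c,a]}$, both of $L^1$-norm $1$, while $g\ne0$ because $f$ is nonzero on a set of positive measure inside $[0,c]$. Hence $f=\tfrac12\big((f+g)+(f-g)\big)$ is the midpoint of a nondegenerate segment of the unit ball, so no norm-one element of $L^1$ is extreme.

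For (ii), suppose $\varphi$ is not linear. Since $\varphi$ is strictly increasing, Theorem~\ref{Th1a} applies directly: the constant function $f\equiv1$ satisfies $|f|=1$ a.e. on $[0,a]$, hence it is an extreme point of the unit ball of $\laf$. In particular that unit ball has an extreme point. Combining (i) and (ii) shows that $L^1$ is the only Lorentz space in the family whose unit ball is devoid of extreme points, which is the assertion of the corollary.

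The point I would treat with the most care is the boundary between (i) and (ii), namely the tacit restriction to strictly increasing $\varphi$. If one allows $\varphi$ that is merely increasing and concave, then a nonlinear $\varphi$ may be constant on a terminal interval $[c,a]$, the norm $\|x\|_{\laf}=\int_0^c x^*(t)\,d\varphi(t)$ is then no longer strictly monotone, and Theorem~\ref{Th1a} (whose proof runs through Lemma~\ref{l0}) is unavailable. In this degenerate situation I would instead show $f\equiv1$ is extreme directly through the Hardy--Littlewood representation $\|x\|_{\laf}=\max\{\int_0^a|x(s)|y(s)\,ds:\ y\text{ equimeasurable with }\varphi'\chi_{[0,c]}\}$: if $\|1\pm g\|_{\laf}\le1$, then testing against every admissible $y$ first forces $|1+g|+|1-g|=2$ a.e. and then $\int_0^a g\,y=0$ for all such $y$, and since $\varphi'$ is non-constant (as $\varphi$ is nonlinear) this yields $g=\mathrm{const}=0$. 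Either way $L^1$ remains the unique extreme-point-free Lorentz space, and the corollary follows.
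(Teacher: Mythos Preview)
Your proposal is correct and matches the paper's intended (but unwritten) argument: the corollary is stated without proof, as an immediate consequence of Theorem~\ref{Th1a} (which exhibits $f\equiv 1$ as an extreme point whenever $\varphi$ is nonlinear and strictly increasing) together with the classical fact that the unit ball of $L^1$ has no extreme points. Your final paragraph treating merely increasing $\varphi$ lies outside the paper's standing hypotheses and is not needed for the corollary as stated, though the Hardy--Littlewood duality sketch you give there is sound in outline.
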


Since $\hf$ is isometric to a subspace of the space $\Lambda(\varphi)$, we get the following extension of the above-mentioned result of the paper \cite{CC} (see Section \ref{S0}) to the case when, in general, $\varphi$ fails to be strictly concave.

\begin{corollary}\label{Cor1}
Suppose $\varphi$ is a nonlinear, strictly increasing, concave, continuous function on $[0,2\pi]$ with $\varphi(0)=0$ and $\varphi(2\pi)=1$.  Then, each inner  function is an extreme point of the unit ball of ${\hf}$. 
\end{corollary}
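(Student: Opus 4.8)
The plan is to deduce Corollary \ref{Cor1} directly from Theorem \ref{Th1a} together with the isometric embedding \eqref{map}, exactly along the lines sketched in Section \ref{S0} for the strictly concave case. First I would recall that for an inner function $I$ one has, by definition, $|\tilde{I}|=1$ a.e. on $[0,2\pi]$, where $\tilde{I}=I(e^{it})$ denotes the boundary (nontangential limit) function. In particular $\|I\|_{\hf}=\|\tilde{I}\|_{\laf}=\int_0^{2\pi}1\,d\varphi(t)=\varphi(2\pi)-\varphi(0)=1$, so $I$ is a normed element of $\hf$.

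Next, since $|\tilde{I}|=1$ a.e. and $\varphi$ satisfies the hypotheses of Theorem \ref{Th1a} with $a=2\pi$, that theorem applies to the function $\tilde{I}$ and shows that $\tilde{I}$ is an extreme point of ${\rm ball}\,(\laf)$. The remaining step is to transfer extremality back through the isometric inclusion \eqref{map}. The abstract principle I would invoke is the following: if $\iota\colon Y\to Z$ is a linear isometry of Banach spaces and $\iota(y_0)$ is an extreme point of ${\rm ball}\,(Z)$, then $y_0$ is an extreme point of ${\rm ball}\,(Y)$. Indeed, $\iota$ maps ${\rm ball}\,(Y)$ into ${\rm ball}\,(Z)$, so any decomposition $y_0=\tfrac12(y_1+y_2)$ with $y_1,y_2\in{\rm ball}\,(Y)$ yields $\iota(y_0)=\tfrac12(\iota(y_1)+\iota(y_2))$ with $\iota(y_1),\iota(y_2)\in{\rm ball}\,(Z)$; extremality of $\iota(y_0)$ in $Z$ forces $\iota(y_1)=\iota(y_2)=\iota(y_0)$, and injectivity of $\iota$ gives $y_1=y_2=y_0$.

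Applying this principle with $\iota$ equal to the mapping \eqref{map}, $Y=\hf$, $Z=\laf[0,2\pi]$, and $y_0=I$, I would conclude that $I$ is an extreme point of ${\rm ball}\,(\hf)$, which is the assertion. I do not expect a genuine obstacle here: all the substance is contained in Theorem \ref{Th1a}, already proved above, and the statement is a true corollary. The only points deserving a word of care are the two facts recorded in Section \ref{S1}, namely that the boundary values of an inner function have modulus one almost everywhere (this is the definition of \emph{inner}) and that \eqref{map} is an isometric embedding of $\hf$ into $\laf$; once these are invoked, the extreme-point transfer is immediate.
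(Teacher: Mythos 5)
Your proposal is correct and follows exactly the paper's own route: the paper derives Corollary \ref{Cor1} from Theorem \ref{Th1a} via the isometric inclusion \eqref{map} of $\hf$ into $\laf$, precisely as you do. The extreme-point transfer principle you spell out is the standard fact the paper leaves implicit, so nothing is missing.
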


Combining Corollary \ref{Cor1} with de Leeuw-Rudin theorem (see Section \ref{S0}) gives the following compliment to the latter theorem.

\begin{theorem}\label{Th3}
Suppose $\varphi$ is a strictly increasing, concave, continuous function on $[0,2\pi]$ such that $\varphi(0)=0$, $\varphi(2\pi)=1$.
The set of extreme points of the unit ball of the space ${\hf}$ coincides with the set of normed outer functions if and only if $\varphi(t)=t/(2\pi)$, i.e., ${\hf}=H^1$.
\end{theorem}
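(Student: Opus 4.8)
The plan is to prove the two implications separately, with essentially all the content concentrated in the "only if" direction, which I would establish by contraposition. The whole point of the theorem is that it packages the new Corollary \ref{Cor1} together with the classical de Leeuw--Rudin theorem, so no genuinely new estimate should be needed.

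For the "if" direction, suppose $\varphi(t)=t/(2\pi)$. Then $d\varphi(t)=dt/(2\pi)$, so the Lorentz norm \eqref{eqLor} reduces to $\|x\|_{\Lambda(\varphi)}=\frac{1}{2\pi}\int_0^{2\pi}x^*(t)\,dt=\frac{1}{2\pi}\int_0^{2\pi}|x(t)|\,dt$; that is, $\Lambda(\varphi)=L^1[0,2\pi]$ and hence $H(\Lambda(\varphi))=H^1$. The asserted coincidence of the set of extreme points of $\mathrm{ball}(H(\Lambda(\varphi)))$ with the set of normed outer functions is then precisely the statement of the de Leeuw--Rudin theorem (Theorem \ref{t-1}), so this direction is immediate.

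For the "only if" direction, I would argue the contrapositive: assuming $\varphi(t)\neq t/(2\pi)$, I want to produce an extreme point of the unit ball of $H(\Lambda(\varphi))$ that is not a normed outer function, which shows the two sets cannot coincide. The first step is the elementary observation that $\varphi$ must then be nonlinear, since $t\mapsto t/(2\pi)$ is the unique affine function on $[0,2\pi]$ with $\varphi(0)=0$ and $\varphi(2\pi)=1$; hence a $\varphi$ different from it is not affine. Consequently $\varphi$ satisfies all the hypotheses of Corollary \ref{Cor1}, and so \emph{every} inner function is an extreme point of the unit ball of $H(\Lambda(\varphi))$. It then suffices to name one inner function that fails to be outer: I would take $f(z)=z$. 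Its nontangential boundary values are $\tilde f(e^{it})=e^{it}$, which are unimodular, so $f$ is inner; in particular $|\tilde f|=1$ a.e., whence $\|f\|_{H(\Lambda(\varphi))}=\int_0^{2\pi}1\,d\varphi(t)=\varphi(2\pi)-\varphi(0)=1$, so $f$ is a normed extreme point. However $f$ is not outer: since $f(0)=0$ we have $\ln|f(0)|=-\infty$, whereas $\frac{1}{2\pi}\int_0^{2\pi}\ln|\tilde f(s)|\,ds=0$, so the defining identity for an outer function fails. Thus $f(z)=z$ is an extreme point that is not a normed outer function, completing the contrapositive.

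I do not expect a real obstacle here, as the heavy lifting is already done by Corollary \ref{Cor1} (which rests on Theorem \ref{Th1a}) and by the de Leeuw--Rudin theorem. The only point that deserves a line of care is the reduction of "$\varphi\neq t/(2\pi)$" to "$\varphi$ nonlinear," which is exactly what makes Corollary \ref{Cor1} applicable; everything else is assembly.
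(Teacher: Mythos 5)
Your proposal is correct and follows exactly the route the paper intends: the ``if'' direction is the de Leeuw--Rudin theorem after identifying $\Lambda(t/(2\pi))=L^1$, and the ``only if'' direction combines Corollary \ref{Cor1} (applicable since $\varphi\neq t/(2\pi)$ forces $\varphi$ nonlinear under the normalization $\varphi(0)=0$, $\varphi(2\pi)=1$) with the observation that an inner function vanishing in $\mathbb{D}$, such as $f(z)=z$, is not outer. No gaps.
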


Let us now move on to the second main result of the paper.

Suppose $\varphi$ is a strictly increasing and strictly  concave  function on $[0,2\pi]$ with $\varphi(0)=0$ and $\varphi(2\pi)=1$. Then, according to the main result of the note \cite{CC}, every inner  function is an extreme point of the unit ball of ${\hf}$. However, as was explained already in Section \ref{S0}, this is really a very easy consequence of Theorem \ref{t-0} and the facts that the function $\tilde{f}(t):=f(e^{it})$, $0\le t\le 2\pi$, is an extreme point of the unit ball of the space $\Lambda(\varphi)$ and mapping \eqref{map} 
includes ${\hf}$ into $\Lambda(\varphi)$ as an isometric subspace.


In the following theorem we show that the same assertion holds (under the same conditions imposed on $\varphi$) for each function $f\in\hf$, $\|f\|_{\hf}=1$, satisfying the much weaker condition that the function $|\tilde{f}|$ is a constant on some set of positive measure. Note that such a function needs to be neither inner, nor outer. 



\begin{theorem}\label{Th2a}
Let $\varphi$ be a strictly increasing and strictly  concave  function on $[0,2\pi]$ such that $\varphi(0)=0$ and $\varphi(2\pi)=1$. Assume that  $f\in\hf$, $\|f\|_{\hf}=1$. Moreover, let the function $\mu:=|\tilde{f}|$ be a constant on some subset of positive measure  of the interval $[0,2\pi]$. 

Then, $f$ is an extreme point of the unit ball of the space $\hf$.
\end{theorem}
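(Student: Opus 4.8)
The plan is to use the extreme‑point criterion recorded in the preliminaries: I assume there is $g\in\hf$ with $\|f\pm g\|_{\hf}\le 1$ and aim to force $g=0$. Since mapping \eqref{map} embeds $\hf$ isometrically into $\laf$, I work with boundary values and abbreviate $\tilde f,\tilde g$. As $\|f\|_{\hf}=1$, the convexity estimate $2=\|(f+g)+(f-g)\|\le\|f+g\|+\|f-g\|\le 2$ gives $\|f\pm g\|_{\hf}=1$. Because $\varphi$ is strictly increasing, the norm of $\laf$ is strictly monotone (Lemma \ref{0}); moreover $f\in H^1$ is nonzero, so $\tilde f\neq 0$ a.e. Hence Lemma \ref{l0} applies and yields $\tilde g=h\tilde f$ with $h$ real‑valued and $|h|\le 1$ a.e. Setting $u:=\mu=|\tilde f|$ and $v:=h\mu$, I have $u\pm v=\mu(1\pm h)\ge 0$ and $\|u\|_{\laf}=\|u\pm v\|_{\laf}=1$.

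Next I would feed $u,v$ into Proposition \ref{propos}: it produces a \emph{single} nested family $\{E_t\}$ with $m(E_t)=t$, satisfying \eqref{pr1}, \eqref{pr2-e}, and (being the common family $E_t(u\pm v)$) the superlevel inclusions \eqref{pra1} for both $u+v$ and $u-v$. Applying Lemma \ref{l-3} to the pair $u+v,\,u-v$ (whose norms add up to $\|2u\|$) gives in addition $(u+v)^*+(u-v)^*=2u^*$ a.e.

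Now I exploit the flat part. Let $c$ be the constant value of $\mu$ on the positive‑measure set $A$; then $c>0$ and $A\subseteq\{u=c\}$, whose measure equals the length $\beta-\alpha$ of the interval $[\alpha,\beta]$ on which $u^*\equiv c$. On $[\alpha,\beta]$ the two decreasing functions $(u+v)^*$ and $(u-v)^*$ have constant sum $2c$, hence each is constant there, say $(u+v)^*\equiv P$ and $(u-v)^*\equiv Q$ with $P+Q=2c$. Using \eqref{pra1} at $t=\alpha$ and $t=\beta$ for $u+v$ and for $u-v$ I get $E_\beta\setminus E_\alpha\subseteq\{u+v=P\}\cap\{u-v=Q\}=\{\,u=(P{+}Q)/2=c,\ v=(P{-}Q)/2\,\}$. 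Since $m(E_\beta\setminus E_\alpha)=\beta-\alpha=m\{u=c\}$, this inclusion is an equality up to null sets, so $v\equiv d:=(P-Q)/2$ on $\{u=c\}\supseteq A$, and therefore $h\equiv d/c$ on $A$.

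The finishing move, which I regard as the most elegant and conceptually the crux, is to globalize this local rigidity by analyticity. The function $G:=g-(d/c)f$ lies in $H^1$ and has boundary value $\tilde G=(h-d/c)\tilde f=0$ on $A$, a set of positive measure; since a nonzero $H^1$ function satisfies $\ln|\tilde G|\in L^1$ and is thus nonzero a.e., we conclude $G\equiv 0$, i.e. $g=(d/c)f$. Then $\|f\pm g\|_{\hf}=|1\pm d/c|\le 1$ for both signs forces $d=0$, whence $g=0$ and $f$ is extreme. The main obstacle I anticipate is precisely Step three: converting the equalities between \emph{rearrangements} into the \emph{pointwise} statement that $v$ is constant on $A$. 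This is exactly where the common family of Proposition \ref{propos} (rather than Lemma \ref{l-3} alone) is indispensable, since it certifies that the set where $u=c$ coincides with the sets where $u+v=P$ and $u-v=Q$; some care at the endpoints $\alpha,\beta$ (right‑continuity of the rearrangements) is needed but routine.
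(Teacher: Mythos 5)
Your proof is correct, and while it shares the paper's scaffolding (the reduction $\tilde g=h\tilde f$ via Lemmas \ref{0} and \ref{l0}, Proposition \ref{propos} to obtain a single nested family $\{E_t\}$ for $u=\mu$ and $v=h\mu$, and Lusin--Privalov to globalize the local rigidity), the crucial middle step --- that $h$ is a.e.\ constant on $\{\mu=c\}$ --- is handled by a genuinely different mechanism. The paper uses the variational characterization $\int_{E_t}\mu(1\pm h)\,ds=\sup\{\int_A\mu(1\pm h)\,ds:\ m(A)=t\}$ together with an exchange argument: if $h$ were not essentially constant on $C$, two subsets $F',F''\subset C$ of equal measure with $\sup_{F''}h<\inf_{F'}h$ would make optimality of $E_{t_0}$ for the plus sign contradict its optimality for the minus sign. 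You instead apply Lemma \ref{l-3} to get $(u+v)^*+(u-v)^*=2u^*$, observe that on the flat interval of $u^*$ each summand, being decreasing with constant sum, is itself constant (say $P$ and $Q$ with $P+Q=2c$), and then use the superlevel inclusions \eqref{pra1} for the common family to identify $E_\beta\setminus E_\alpha$ with $\{u+v=P\}\cap\{u-v=Q\}=\{u=c,\ v=(P-Q)/2\}$ up to null sets; a measure count then yields $v\equiv(P-Q)/2$ on $\{u=c\}$. Your route is arguably more transparent and even exhibits the constant value of $h$ explicitly, but it leans on two points worth spelling out: (i) the family delivered by Proposition \ref{propos} does satisfy \eqref{pra1} for both $u+v$ and $u-v$ --- this is true by the construction in its proof (and can also be deduced from \eqref{pr2-e} together with $m(E_t)=t$), but it is not part of the proposition's formal statement; and (ii) the endpoint care you already flag, since $(u\pm v)^*$ may jump at $\beta$; replacing $E_\beta$ by $\bigcup_{t<\beta}E_t$, a null-set modification, settles it. Both routes are sound and of comparable length.
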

\begin{proof}
By the assumption, we have that $m\{s\in [0,2\pi]:\,\mu(s)=c\}>0$ for some $c\ge 0$.

To the contrary, assume that $f$ fails to be an extreme point of the unit ball of $\hf$. Thus, there exists a function $g\in \hf$, $g\ne 0$,  for which it holds
\begin{equation}\label{e1}
\|\tilde{f}\pm \tilde{g}\|_{\Lambda(\varphi)}=\|f\pm g\|_{\hf}=1.
\end{equation}

Since $\varphi$ is being strictly increasing, the norm of the space $\Lambda(\varphi)$ is strictly monotone (see Lemma \ref{0}). Moreover, Lusin-Privalov uniqueness theorem (see, e.g., \cite[\S\,III.$3^0$]{koosis}) combined with the condition $\|f\|_{\hf}=1$ implies that $m\{t:\,\tilde{f}(t)=0\}=0$. Therefore, by Lemma \ref{l0}, we have $\tilde{g}=\tilde{f}\cdot h$, where $h$ is real-valued and $|h(t)|\le 1$ a.e. on $[0,2\pi]$.  Hence, both functions $\mu\pm \mu h=\mu(1\pm h)$ are nonnegative, and from \eqref{e1} it follows that 
\begin{equation}\label{e1dop}
\|\mu(1\pm h)\|_{\Lambda(\varphi)}=1.
\end{equation}

Next, according to Proposition \ref{propos}, there exists a family of subsets $\{E_t\}_{0<t\le 2\pi}$ of the interval $[0,2\pi]$ such that 
\begin{equation}
\label{pr1mu}
\int_{E_t}\mu(s)\,ds=\int_0^t \mu^*(s)\,ds,\;\;0<t\le 2\pi,
\end{equation}
and
\begin{equation}
\label{pr2mu}
\int_{E_t}\mu(s)(1\pm h(s))\,ds=\int_0^t (\mu(1\pm h))^*(s)\,ds,\;\;0<t\le 2\pi.
\end{equation}
Combining \eqref{pr2mu} with \cite[\S\,II.2, Property $8^0$]{krein-petunin-semenov}), for both plus and minus signs we get:
\begin{equation}
\label{pr4mu-e}
\int_{E_t}\mu(s)(1\pm h(s))\,ds=\sup\Big\{\int_{A}\mu(s)(1\pm h(s))\,ds:\,A\subset [0,2\pi], m(A)=t\Big\}.
\end{equation}

Let us denote
$$
C:=\{s\in [0,2\pi]:\,\mu(s)=c\},\;\;C_>:=\{s\in [0,2\pi]:\,\mu(s)>c\}.$$
Since $\mu^*$ and $\mu$ are equimeasurable and the rearrangement $\mu^*$ decreases, it follows (up to a set of zero measure) that
$$
\{s\in [0,2\pi]:\,\mu^*(s)=c\}=(a,b),$$
where
$$
a=m(C_>)=m\{s\in [0,2\pi]:\,\mu^*(s)>c\},\;b=m(C)+a.
$$


Let $t\in (a,b)$ be arbitrary. Since $\mu^*(t)=c$, by the definition of the sets $C$ and $C_>$, we have  
$$
\{s\in [0,2\pi]:\,\mu(s)=\mu^*(t)\}=C,\;\;\{s\in [0,2\pi]:\,\mu(s)>\mu^*(t)\}=C_>.$$
Consequently, from the fact that $m(E_t)=t$ and the embeddings
$$
\{s\in [0,2\pi]:\,\mu(s)>\mu^*(t)\}\subset E_t\subset \{s\in [0,2\pi]:\,\mu(s)\ge\mu^*(t)\}$$
(see \eqref{pra1}) it follows that
\begin{equation}
\label{pr1mu-e}
E_t=C_>\cup F_t,
\end{equation}
where $F_t\subset C$ and $m(F_t)=t-a$ (in the case when $a=0$ we have $C_>=\emptyset$).
Thus, since $\mu(s)=c$ if $s\in C$, for all $t\in (a,b)$ it holds
\begin{eqnarray}
\int_{E_t}\mu(s)(1\pm h(s))\,ds&=&\int_{C_>}\mu(s)(1\pm h(s))\,ds+\int_{F_t}\mu(s)(1\pm h(s))\,ds\nonumber\\
&=&\int_{C_>}\mu(s)(1\pm h(s))\,ds+c\Big(t-a\pm \int_{F_t}h(s)\,ds\Big).
\label{pr2mu-e}
\end{eqnarray}

We claim that the function $h$ is constant a.e. on the set $C$, i.e., 
\begin{equation}\label{5}
h(t)=c_0\;\;\mbox{a.e. on}\; C\;\;\mbox{for some}\;c_0\in [-1,1].
\end{equation}

Assuming that it is not the case, by continuity of the Lebesgue measure, we can find
$t_0\in (a,b)$ and two subsets $F'$ and $F''$ of $C$
such that $m(F')=m(F'')=t_0-a$ and 
\begin{equation}
\label{pr3mu-e}
\alpha:=\sup_{s\in F''}\,h(s)<\beta:=\inf_{s\in F'}\,h(s).
\end{equation}
Let us fix such a $t_0$.

Setting $E':=C_>\cup F'$ and $E'':=C_>\cup F''$, as above, we obtain
\begin{equation}
\int_{E'}\mu(s)(1\pm h(s))\,ds=\int_{C_>}\mu(s)(1\pm h(s))\,ds+c\Big(t_0-a\pm \int_{F'}h(s)\,ds\Big)
\label{prnew1}
\end{equation}
and
\begin{equation}
\int_{E''}\mu(s)(1\pm h(s))\,ds=\int_{C_>}\mu(s)(1\pm h(s))\,ds+c\Big(t_0-a\pm \int_{F''}h(s)\,ds\Big).
\label{prnew2}
\end{equation}
Since $m(E')=t_0$, from \eqref{pr4mu-e} it follows
$$
\int_{E_{t_0}}\mu(s)(1+h(s))\,ds\ge \int_{E'}\mu(s)(1+h(s))\,ds.$$
Hence, comparing the right-hand sides of equations \eqref{pr2mu-e} (with $t=t_0$) and \eqref{prnew1} for the plus sign and applying \eqref{pr3mu-e}, we obtain
$$
\int_{F_{t_0}}h(s)\,ds\ge \int_{F'}h(s)\,ds\ge\beta(t_0-a)>\alpha(t_0-a)\ge \int_{F''}h(s)\,ds.$$
Consequently, by \eqref{pr2mu-e} (with $t=t_0$) and \eqref{prnew2} for the minus sign,
\begin{eqnarray*}
\int_{E_{t_0}}\mu(s)(1-h(s))\,ds&<&\int_{C_>}\mu(s)(1-h(s))\,ds+c\Big(t_0-a-\int_{F''}h(s)\,ds\Big)\\&=&\int_{E''}\mu(s)(1-h(s))\,ds.
\end{eqnarray*}
Since $m(E'')=t_0$, the last inequality contradicts \eqref{pr4mu-e} for $t=t_0$ and the minus sign.
As a result, claim \eqref{5} is proved.

In view of \eqref{5} and the definition of $h$, we have that $
\tilde{g}=c_0\cdot \tilde{f}$ a.e. on $C$.
This implies that the nontangential limit of the analytic function $g(z)-c_0 f(z)$ as $|z|\to 1$ vanishes on some subset of positive measure of the circle $\mathbb{T}$. Hence, applying Lusin-Privalov uniqueness theorem (see e.g. \cite[\S\,III.$3^0$]{koosis}) once more, we conclude that $g(z)=c_0 f(z)$ for all $z\in\D$. Hence, $h=c_0$ a.e. on $[0,2\pi]$. In view of  \eqref{e1dop}, this implies that
$$
(1+c_0)\|f\|_{\hf}=(1-c_0)\|f\|_{\hf}=1,$$
whence $c_0=0$ or equivalently $g=0$, which contradicts the assumption. This completes the proof.

\end{proof}



\end{document}